\newcommand{\w}{\omega}
\newcommand{\diam}{\mathrm{diam}\,}
\newcommand{\dist}{\mathrm{dist}}
\newcommand{\id}{\mathrm{id}}
\newcommand{\asdim}{\mathrm{asdim}}
\newcommand{\pr}{\mathrm{pr}}
\newcommand{\IN}{\mathbb{N}}
\newcommand{\IZ}{\mathbb{Z}}
\newcommand{\U}{\mathcal{U}}
\newcommand{\e}{\varepsilon}
\newcommand{\tg}{\mathrm{tg}}
\newcommand{\fact}{\phi}
\newtheorem{theorem}{Theorem}[section]
\newtheorem{proposition}[theorem]{Proposition}
\newtheorem{lemma}[theorem]{Lemma}
\newtheorem{corollary}[theorem]{Corollary}
\newtheorem{example}[theorem]{Example}
\newtheorem{claim}[theorem]{Claim}
\title{Coarse classification of abelian groups and amenable shift-homogeneous metric spaces}
\author{Taras Banakh}
\address{Jan Kochanowski University, Kielce, Poland, and Ivan Franko University of Lviv, Ukraine}
\email{t.o.banakh@gmail.com}
\author{Matija Cencelj}
\address{Faculty of Education and IMFM, University of Ljubljana, P.O.B. 2964, Ljubljana 1001,
Slovenia}
\email{matija.cencelj@guest.arnes.si}
\author[D. Repov\v s]{Du\v{s}an~Repov\v{s}}
\address{Faculty of Education, and Faculty of Mathematics and Physics,
University of Ljubljana, P.O.B. 2964, Ljubljana, 1001, Slovenia}
\email{dusan.repovs@guest.arnes.si}
\author{Ihor Zarichnyi}
\address{Pidstryhach Institute for Applied Problems of Mechanics and Mathematics, Lviv, Ukraine}
\email{ihor.zarichnyj@gmail.com}
\subjclass{20F65}
\keywords{Coarse isomorphism, coarse equivalence, locally finite-by-abelian group, homogeneous proper metric space}
\begin{document}
\begin{abstract}In this paper we classify countable locally finite-by-abelian groups up to coarse isomorphism. This classification is derived from a coarse classification of amenable shift-homogeneous metric spaces.
\end{abstract}

\maketitle

\section{Introduction}

This paper is devoted to the problem of classification of countable  groups and homogeneous metric spaces up to coarse isomorphism or coarse equivalence, which is one of fundamental problems of Geometric Group Theory \cite{Gr1}, \cite{Gr2}, \cite{FM}. The main results of the paper are Theorems~\ref{t1.2}--\ref{t1.5} formulated in the introduction which is divided into two subsections. In the first part we survey the obtained results on the coarse classification of groups. These results will be derived from more general classification results
for shift-homogeneous metric spaces, presented in the second subsection of the introduction. The main theorems presented in the introduction will be proved in Sections~\ref{s5}, \ref{s6}, \ref{s8}, and \ref{s10}.

\subsection{Coarse classification of groups}

A function $f:G\to H$ between two groups is called
\begin{itemize}
\item a {\em coarse map} if for each finite subset $E\subset G$ there is a finite subset $D\subset H$ such that $f(x\cdot E)\subset f(x)\cdot D$ for each point $x\in G$;
\item a {\em coarse isomorphism} if $f$ is bijective and the maps $f$ and $f^{-1}$ are coarse.
\item a {\em coarse equivalence} if $f$ is coarse and there are a coarse function $g:H\to G$ and finite sets $D\subset G$ and $E\subset H$ such that $g\circ f(x)\in xD$ and $f\circ g(y)\in yE$ for any points $x\in G$ and $y\in H$.
\end{itemize}

Two groups $G,H$ are {\em coarsely equivalent} (resp. {\em coarsely isomorphic}) if there is a coarse equivalence (coarse isomorphism) $f:G\to H$. Since each coarse isomorphism is a coarse equivalence, any two coarsely isomorphic groups are coarsely equivalent.

The problem of classification of groups up to coarse equivalence was considered in \cite{BHZ}. Some final results were obtained for classes of locally finite and abelian groups. Let us recall that a group $G$ is called {\em locally finite} if each finitely generated subgroup of $G$ is finite. Locally finite and abelian groups belong to the class of locally finite-by-abelian groups.

A group $G$ is called {\em finite-by-abelian} if $G$ contains a finite normal subgroup $N\subset G$ with abelian quotient group $G/N$. By \cite{Noumann}, a locally finite group $G$ is finite-by-abelian if and only if it is an {\em FC-group} in the sense that each element $x\in G$ has finite conjugacy class $x^G=\{gxg^{-1}:g\in G\}$. A group $G$ is {\em locally finite-by abelian} if each finitely generated subgroup of $G$ is finite-by-abelian.

By the free rank $r_0(G)$ of a group $G$ we understand the number
$$r_0(G)=\sup\{n\in\w:\IZ^n\mbox{ is isomorphic to a subgroup of $G$}\}.$$ By Proposition~\ref{p9.4}, the free rank $r_0(G)$ of a locally finite-by-abelian group $G$ is equal to the asymptotic dimension $\asdim(G)$ of $G$.

The following classification of locally finite-by-abelian groups was proved in \cite{BHZ}.

\begin{theorem}\label{t1.1} Two countable locally finite-by-abelian groups $G,H$ are coarsely equivalent if and only if the free ranks of these groups coincide and either both groups are finitely generated or both are infinitely generated.
\end{theorem}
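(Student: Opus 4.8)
The plan is to prove the two implications separately, reducing each group to a canonical model; throughout, let $\sim$ denote coarse equivalence. For the necessity I would rely on two coarse invariants. First, since asymptotic dimension is invariant under coarse equivalence and, by Proposition~\ref{p9.4}, coincides with the free rank on the class of locally finite-by-abelian groups, any coarse equivalence $G\sim H$ forces $r_0(G)=r_0(H)$. Second, I would show that a countable group is finitely generated if and only if its finitary coarse structure is coarsely connected, i.e.\ generated by a single entourage: a finite generating set $E$ yields an entourage $E_F=\{(x,y):x^{-1}y\in F\}$ whose iterates generate the whole coarse structure, while conversely, if a single $E_F$ generates the structure, then $F$ generates $G$. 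Since coarse connectedness is preserved by coarse equivalences, being finitely generated (equivalently, infinitely generated) is a coarse invariant, which completes the necessity.

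For the sufficiency I would treat the two cases separately. In the finitely generated case, a finitely generated locally finite-by-abelian group is itself finite-by-abelian, hence an extension of a finitely generated abelian group by a finite group, so it is virtually $\IZ^n$ with $n=r_0(G)$. Since passage to a finite-index subgroup and quotient by a finite normal subgroup are coarse equivalences, every finitely generated locally finite-by-abelian group of free rank $n$ satisfies $G\sim\IZ^n$, and hence any two such groups of equal free rank are coarsely equivalent to each other through the common model $\IZ^n$.

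The infinitely generated case is the heart of the argument and the main obstacle. Here I would establish a coarse splitting $G\sim \IZ^n\times L_G$, where $n=r_0(G)$ and $L_G$ is a countably infinite locally finite group, obtained by extracting a free abelian subgroup realizing the rank $n$ and absorbing the remaining finite-by-abelian torsion data into a locally finite complement. The delicate point is controlling the interaction between the free abelian part and the finite normal subgroups spread across the infinitely many generators, so that the splitting is genuinely coarse rather than merely algebraic; this is where the finite-by-abelian (as opposed to abelian) structure makes the bookkeeping subtle. Granting this splitting, I would invoke the classification of countably infinite locally finite groups up to coarse equivalence, according to which all of them satisfy $L_G\sim\bigoplus_{\IN}\IZ_2$. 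Combining these facts yields $G\sim \IZ^n\times\bigoplus_{\IN}\IZ_2\sim H$ for any two infinitely generated locally finite-by-abelian groups of the common free rank $n$, which finishes the proof.
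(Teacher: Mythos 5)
Your necessity argument is sound (asymptotic dimension equals free rank by Proposition~\ref{p9.4} and is a coarse invariant, and finite generation is equivalent to large scale connectedness, which is likewise preserved by coarse equivalences), and the finitely generated case of sufficiency is correct. But the step you explicitly ``grant'' --- the coarse splitting $G\sim\IZ^n\times L_G$ with $L_G$ locally finite --- is precisely the mathematical content of the theorem, not a bookkeeping detail, so the proposal has a genuine hole at its center. The machinery that fills it is what this paper (and \cite{BHZ}) is built around: one first chooses, via Propositions~\ref{p9.1}--\ref{p9.3}, a finitely generated subgroup $H=\langle S\rangle$ of locally finite index with $|S|=r_0(G)$; then Lemma~\ref{l9.6} produces a proper left-invariant metric in which $H$ is the factorizing component $G_{s(G)}$; then Proposition~\ref{p9.5} shows $G$ is shift-homogeneous (this is exactly where the finite-by-abelian hypothesis enters: the finiteness of the conjugacy class $(ba^{-1})^H$ makes left shifts displace each large scale connected set by a bounded amount); and finally the Factorization Theorem~\ref{t4.1} yields the coarse isomorphism $G\cong H\times (G/H)$, where $G/H$ is a countable homogeneous proper ultrametric space, unbounded when $G$ is infinitely generated, hence coarsely equivalent to the Cantor macro-cube $2^{<\IN}$ (equivalently to $\bigoplus_{n\in\IN}\IZ_2$) by \cite{BZ}. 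Your closing appeal to the classification of countably infinite locally finite groups is correct, but without an actual proof of the splitting your argument reduces the theorem to an unproved claim of comparable depth. (Note also that the paper itself does not reprove Theorem~\ref{t1.1}: it cites \cite{BHZ} and proves the metric analogue, Theorems~\ref{t1.3} and~\ref{t6.2}, by exactly the route sketched above.)

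There is a second, unconditional gap: the case $r_0(G)=r_0(H)=\infty$. The theorem asserts that any two countable locally finite-by-abelian groups of infinite free rank are coarsely equivalent (both are then automatically infinitely generated), but your reduction to $\IZ^n\times\bigoplus_{\IN}\IZ_2$ presupposes $n$ finite: when $r_0(G)=\infty$ there is no finitely generated subgroup of locally finite index (Propositions~\ref{p9.1} and~\ref{p9.3}), the factorizing step is infinite, and no splitting with a $\IZ^n$ factor is available, so your canonical model simply does not exist in this case. The paper handles it by a separate argument, quoted in Section~\ref{s10}: by \cite[6.3]{BHZ} every countable locally finite-by-abelian group is coarsely isomorphic to an abelian group, and by \cite[6.4]{BHZ} any two countable abelian groups of infinite asymptotic dimension are coarsely isomorphic. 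You would need to add this case (or an equivalent substitute) for the sufficiency direction to be complete.
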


The classification of locally finite-by-abelian groups up to coarse isomorphism is more delicate and depends on so-called factorizing function of a group. This function is defined as follows.

For a subgroup $H$ of a group $G$ its {\em index spectrum}  $\sigma_{G:H}$ is the set of all possible indexes of $H$ in subgroups $E\subset G$ that contain $H$. The {\em factorizing function} of a subgroup $H\subset G$ is the function $\fact_{G/H}:\Pi\to\w\cup\{\infty\}$
defined on the set $\Pi$ of prime numbers and assigning to each prime number $p\in\Pi$ the (finite or infinite) number
$$\fact_{G/H}(p)=\sup\big\{k\in\w:p^k\mbox{ divides some finite number $i\in\sigma_{G:H}$}\big\}.$$

We shall say that two functions $f,g:\Pi\to\w\cup\{\infty\}$ are {\em almost equal} and write $f=^*g$ if $$\sum_{p\in P}|f(p)-g(p)|<\infty.$$
Here we assume that $\infty-\infty=0$ and $|\infty-n|=|n-\infty|=\infty$ for $n\in\w$.

Following \cite{BHZ} we say that a subgroup $H$ has {\em locally finite index} in $G$ if $H$ has finite index in each subgroup $\langle H\cup F\rangle$ generated by the union $H\cup F$ of $H$ with a finite subset $F\subset G$.

One of the principal results of this paper is the following theorem classifying locally finite-by-abelian groups up to coarse isomorphism.

\begin{theorem}\label{t1.2} Let $G_1,G_2$ be countable locally finite-by-abelian groups. In each group $G_i$ fix a subset $S_i\subset G_i$ of the smallest possible cardinality generating a subgroup $H_i=\langle S_i\rangle$ of locally finite index in $G_i$.
The groups $G_1$ and $G_2$ are coarsely isomorphic if and only if one of the following conditions holds:
\begin{enumerate}
\item $r_0(G_1)=r_0(G_2)=\infty$;
\item $r_0(G_1)=r_0(G_2)=0$ and $\fact_{G_1{/}H_1}=\fact_{G_2{/}H_2}$;
\item $0<r_0(G_1)=r_0(G_2)<\infty$ and $\fact_{G_1{/}H_1}=^*\fact_{G_2{/}H_2}$.
\end{enumerate}
\end{theorem}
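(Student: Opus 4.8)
The plan is to deduce the theorem from the coarse classification of amenable shift-homogeneous proper metric spaces established later in the paper. I would first observe that the coarse-map condition $f(x\cdot E)\subset f(x)\cdot D$ is exactly the requirement that $f$ be controlled for the canonical coarse structure on $G$ whose entourages are generated by the finite subsets; for a countable group this coarse structure is induced by a proper left-invariant metric, and coarse isomorphism of groups coincides with the corresponding notion for the resulting metric spaces. Each such space is shift-homogeneous, the left translations $x\mapsto gx$ acting transitively by isometries, and amenable, since every locally finite-by-abelian group is amenable. Thus it suffices to match the invariants appearing in Theorem~\ref{t1.2} with those governing the classification of amenable shift-homogeneous metric spaces.

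For the necessity direction, the first invariant is the asymptotic dimension, which by Proposition~\ref{p9.4} equals the free rank $r_0(G)$; a coarse isomorphism preserves it, and Theorem~\ref{t1.1} additionally shows that the generation type is preserved. The second, finer invariant is the factorizing function $\fact_{G/H}$, which records, prime by prime, the divisibility of the indices realized in the index spectrum $\sigma_{G:H}$ of a subgroup $H$ of locally finite index. I would show that this function is precisely the combinatorial datum distinguishing the model spaces: when $r_0=0$ it is a complete coarse-isomorphism invariant, whereas a nonzero free part creates room to absorb a bounded amount of divisibility, so that only the class of $\fact_{G/H}$ under $=^*$ is invariant.

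For the converse I would construct coarse isomorphisms onto explicit models. In case (1) every group of infinite free rank is coarsely isomorphic to a single universal amenable shift-homogeneous space, so (1) alone suffices. In case (2) the group is coarsely isomorphic to a locally finite group built from a tower of finite subgroups whose index growth realizes the prescribed $\fact_{G/H}$; equality of the factorizing functions then yields a level-preserving bijection of bounded displacement. In case (3) the group is coarsely isomorphic to a product $\IZ^n\times L$ with $L$ locally finite, and the elementary coarse isomorphism $\IZ\times\IZ/m\to\IZ$ given by the bounded-jump bijection $(k,j)\mapsto mk+j$ lets me trade a finite amount of divisibility between the free and locally finite factors; this is exactly the maneuver that promotes an $=^*$-equality to a genuine coarse isomorphism.

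The main obstacle is the sufficiency in case (3): one must assemble a single bijection that matches the agreeing tails of the two factorizing functions while absorbing their finite discrepancy into the $\IZ^n$ factor, and then verify that both this map and its inverse send finite sets to finite sets with bounded control. A preliminary lemma is also required, namely that $\fact_{G/H}$ does not depend on the choice of the minimal generating set $S$ of the locally-finite-index subgroup $H=\langle S\rangle$, so that the invariants in the statement are well defined.
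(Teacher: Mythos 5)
Your overall architecture --- pass to proper left-invariant metrics, check amenability and shift-homogeneity, quote the classification of amenable shift-homogeneous spaces (Theorem~\ref{t1.5}) for necessity, and realize each group as a model $\IZ^{r_0}\times\IZ_{\fact}$ using the coarse isomorphism $\IZ\times\IZ_k\cong\IZ$ for sufficiency --- is the same as the paper's proof in Section~\ref{s10}. But one step of your plan is genuinely wrong as stated: you claim shift-homogeneity because ``the left translations $x\mapsto gx$ act transitively by isometries.'' Transitive isometric action gives only \emph{homogeneity}. A shift-isometry must in addition displace every large scale connected subset by a bounded amount, and for the left shift $f(x)=ba^{-1}x$ one has $d(f(x),x)=d(x\cdot x^{-1}ba^{-1}x,x)=d(x^{-1}ba^{-1}x,1_G)$; so finiteness of $d(f|C,\id)$ for $C$ inside a finitely generated subgroup $H\supset C\cup\{ba^{-1}\}$ amounts exactly to finiteness of the conjugacy class $(ba^{-1})^H$. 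This is Neumann's FC-condition, and it is precisely here that the locally finite-by-abelian hypothesis is consumed (Proposition~\ref{p9.5}). The claim fails for amenable groups in general: in the lamplighter group $\IZ_2\wr\IZ$ a nontrivial element $t$ of the base has infinite conjugacy class, so translation by $t$ displaces the (finitely generated, hence large scale connected) group unboundedly. Had shift-homogeneity been automatic, Theorem~\ref{t1.5} would classify all countable amenable groups and the restriction to locally finite-by-abelian groups in Theorem~\ref{t1.2} would be vacuous; so this step cannot be waved through.

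A second, smaller but real omission is the bridge between the group-theoretic invariant $\fact_{G/H}$, defined through the index spectrum $\sigma_{G:H}$, and the metric invariant $\fact_X$ that actually appears in Theorem~\ref{t1.5}, which is computed from the factorizing component $X_{s(X)}$ of a chosen metric. For an arbitrary proper left-invariant metric the factorizing component of $G$ need not coincide with your subgroup $H=\langle S\rangle$. The paper resolves this with the metrization Lemma~\ref{l9.6}: one chooses the metric so that $H=G_{s(G)}$, whence $G/s(G)$ is the coset space $G/H$ and $\fact_{G/H}=\fact_G$; only after this normalization does Theorem~\ref{t1.5} translate into conditions (2) and (3) of the statement (in case (3) this is combined, exactly as in your sketch, with $H_i$ being coarsely isomorphic to $\IZ^{r_0(G_i)}$ via the abelian-by-finite structure and the $\IZ\times\IZ_k\cong\IZ$ absorption, and case (1) rests on the cited results of \cite{BHZ} rather than on a construction). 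By contrast, the preliminary well-definedness lemma you flag is not needed: the theorem fixes the sets $S_i$, and independence of the choice follows a posteriori from the equivalence itself. With Proposition~\ref{p9.5} substituted for your homogeneity claim and Lemma~\ref{l9.6} inserted, your plan becomes the paper's proof.
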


Treating countable groups as homogeneous metric spaces, we shall derive Theorem~\ref{t1.2} from a more general Theorem~\ref{t1.5} classifying  amenable shift-homogeneous metric spaces. So, in the next subsection we shall survey some principal results on the coarse classification of homogeneous metric spaces.

\subsection{Coarse classification of homogeneous metric spaces}

A metric space $(X,d_X)$ is called
\begin{itemize}
\item {\em homogeneous} if for any points $x,y\in X$ there is an isometric bijection $f:X\to X$ such that $f(x)=y$;
\item {\em proper} if each closed bounded subset of $X$ is compact;
\item {\em boundedly-finite} if each bounded subset of $X$ is finite.
\end{itemize}

It is clear that each boundedly-finite metric space is proper.
The converse is true for countable homogeneous spaces: the Baire Theorem guarantees that a countable proper space $X$ contains an isolated points; then by the homogeneity, each point of $X$ is isolated and hence each compact subset of $X$ is finite.

It is clear that for each left-invariant metric $d$ on a group $G$, the metric space $(G,d)$ is homogeneous: for any points $x,y\in G$ the left shift $f:G\to G$, $f:g\mapsto yx^{-1}g$, is a bijective isometry with $f(x)=y$.
\smallskip

A map $f:X\to Y$ between metric spaces is called
\begin{itemize}
\item {\em coarse} if for each finite $\delta$ the {\em oscillation} $$\w_f(\delta)=\sup\{d_Y(f(x),f(x')):x,x'\in X \mbox{ and }d_X(x,x')\le\delta\}$$ is finite;
\item {\em a coarse isomorphism} if $f$ is bijective and the maps $f$ and $f^{-1}$ are coarse;
\item {\em a coarse equivalence} if $f$ is coarse and there is a coarse map $g:Y\to X$ such that the numbers $d_X(\id_X,g\circ f)=\sup_{x\in X}d_X(x,g\circ f(x))$ and $d_Y(\id_Y,f\circ g)=\sup_{y\in Y}d_Y(y,f\circ g(y))$ are finite.
\end{itemize}

It can be shown that two countable groups are coarsely isomorphic (coarsely equivalent) if and only if they are coarsely isomorphic (coarsely equivalent) as metric spaces endowed with proper left-invariant metrics. This shows that the notion of coarse isomorphism (coarse equivalence) of groups actually has a metric nature.

By \cite{Smith}, each countable group $G$ carries a proper left-invariant metric and such metric is unique up to coarse isomorphism. The latter means that for any two proper left-invariant metrics $d,\rho$ on $G$ the identity map $\id:(G,d)\to(G,\rho)$ is a coarse isomorphism of metric spaces.

From now on, each countable group $G$ is endowed with a proper left-invariant metric and is considered as a homogeneous metric space.

Many notions defined for groups can be naturally generalized to homogeneous metric spaces. In particular, finitely generated subgroups
correspond to large scale connected subsets of metric spaces.

Let $(X,d_X)$ be a metric space and $\e\ge 0$ be a real number. A subset $C\subset X$ is called {\em $\e$-connected} if for any two points $x,y\in X$ there is a sequence of points $x=x_0,x_1,\dots,x_n=y$ such that $d_X(x_{i},x_{i-1})\le\e$ for all $i\le n$. Such sequence $x_0,\dots,x_n$ will be called an {\em $\e$-chain of length $n$} linking the points $x,y$.

For a point $x\in X$ its {\em $\e$-connected component} $X_\e(x)$ is the maximal $\e$-connected subset of $X$ containing $x$. It consists of all points $y\in X$ that can be linked with $x$ by an $\e$-chain.
It will be convenient to put also $X_\infty(x)=X$.

A metric space $X$ is called {\em large scale connected} if it is $\e$-connected for some $\e<\infty$. It follows that a group $G$ endowed with a proper left-invariant metric is large scale connected if and only if it is finitely generated.

By a {\em factorizing step} of a metric space $X$ we understand the (finite or infinite) number
$$s(X)=\inf\big(\{\infty\}\cup\big\{\e\in[0,\infty):\forall \delta\in[\e,\infty)\;\forall x\in X \mbox{ the set $X_\delta(x)/\e=\{X_\e(y):y\in X_\delta(x)\big\}$ is finite}\big\}\big).$$
For a homogeneous metric space $X$ by $X_{s(X)}$ we denote any $s(X)$-connected component of $X$ (by homogeneity, all $s(X)$-connected components of $X$ are pairwise isometric).
The space $X_{s(X)}$ will be called {\em the factorizing component} of $X$.

The following theorem proved in Section~\ref{s6} can be considered as a metric analogue of Theorem~\ref{t1.1}.

\begin{theorem}\label{t1.3} Any two shift-homogeneous boundedly-finite metric spaces $X,Y$ are coarsely equivalent if and only if their factorizing components $X_{s(X)}$ and $Y_{s(Y)}$ are coarsely equivalent and the spaces $X,Y$ either both are large scale connected or both are not large scale connected.
\end{theorem}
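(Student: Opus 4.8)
The plan is to prove both implications, resting everything on a single structural lemma that controls how the factorizing component sits inside the whole space. Write $X\approx Y$ for ``coarsely equivalent''. Recall that shift-homogeneity supplies a transitive group $\Gamma$ of \emph{shifts}, i.e. isometries $\gamma$ of finite displacement $\|\gamma\|:=\sup_{z\in X}d_X(z,\gamma(z))<\infty$, and that isometries carry $s(X)$-components onto $s(X)$-components. The central point is the following. \textbf{Key Lemma.} For $\delta\ge s(X)$ the component $X_{s(X)}$ is coarsely dense in the $\delta$-component $X_\delta(x)$ containing it, so the inclusion $X_{s(X)}\hookrightarrow X_\delta(x)$ is a coarse equivalence. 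I would prove it from the definition of $s(X)$: the set $X_\delta(x)$ splits into finitely many $s(X)$-components $Z_1=X_{s(X)},Z_2,\dots,Z_k$, and for each $j$ a shift $\gamma$ with $\gamma(p)=q$ (for chosen $p\in Z_1$, $q\in Z_j$) satisfies $\gamma(Z_1)=Z_j$, whence $\dist(w,Z_1)\le\|\gamma\|$ for every $w\in Z_j$; finiteness of $k$ gives a uniform bound. Two consequences are immediate: \emph{(stability)} the coarse type of $X_\delta(x)$ is the same for every $\delta\ge s(X)$ and equals that of $X_{s(X)}$; and \emph{(reduction)} if $X$ is large scale connected then $X=X_\delta(x)$ for some finite $\delta\ge s(X)$, so $X\approx X_{s(X)}$.

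For the forward implication let $f\colon X\to Y$ be a coarse equivalence with coarse inverse $g$. That $X$ and $Y$ are simultaneously large scale connected or not is routine: $f$ sends an $\e$-chain to an $\w_f(\e)$-chain, hence maps each $\e$-component into a single $\w_f(\e)$-component, and coarse surjectivity transports large scale connectedness both ways. For the factorizing components I would compare the directed systems $\{X_\delta(x)\}_{\delta\ge s(X)}$ and $\{Y_{\delta'}(f(x))\}_{\delta'\ge s(Y)}$: the map $f$ carries each member of the first system into a member of the second (enlarging the scale by $\w_f$), and $g$ does the reverse (enlarging by $\w_g$ and absorbing the bounded error of $g\circ f$), and these inclusions are cofinal and mutually compatible. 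Combining this with stability, the scale-independent coarse type common to each system is a coarse invariant, which says precisely that $X_{s(X)}\approx Y_{s(Y)}$.

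For the backward implication assume $X_{s(X)}\approx Y_{s(Y)}$ and that $X,Y$ are simultaneously large scale connected or not. If both are large scale connected, the reduction consequence yields $X\approx X_{s(X)}\approx Y_{s(Y)}\approx Y$. The substantial case is when neither is large scale connected, so each space is a union of infinitely many pairwise isometric $s$-components. Here I would introduce the \emph{skeleton} $\Sigma_X$, the countable set of $s(X)$-components equipped with the Hausdorff metric. Finite-displacement shifts place $\delta$-adjacent components at finite Hausdorff distance, and the argument that a Hausdorff-$\delta'$-chain of components must lie in one $X$-$\max(s(X),\delta')$-component shows that every Hausdorff-bounded set of components is finite and every Hausdorff-$\delta'$-component is finite; thus $\Sigma_X$ is a countable, shift-homogeneous, boundedly-finite space, not large scale connected, whose factorizing component is a single point. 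Fixing for each component $Z_i$ a shift $\gamma_i$ with $\gamma_i(X_{s(X)})=Z_i$, the assignment $x\mapsto(\gamma_{i(x)}^{-1}(x),[Z_{i(x)}])$ trivializes the ``bundle'' of components up to bounded error and gives $X\approx X_{s(X)}\times\Sigma_X$, and likewise $Y\approx Y_{s(Y)}\times\Sigma_Y$. It then remains to prove that any two countable, shift-homogeneous, boundedly-finite spaces with one-point factorizing component (such as $\Sigma_X,\Sigma_Y$) are coarsely equivalent; this is the metric analogue of the fact that all infinite countable locally finite groups are coarsely equivalent, and I would obtain it by a back-and-forth matching of the increasing sequences of finite $\delta$-components. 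Composing $X\approx X_{s(X)}\times\Sigma_X\approx Y_{s(Y)}\times\Sigma_Y\approx Y$ completes the proof (the degenerate case $s(X)=\infty$, where the factorizing component is all of $X$, is handled directly).

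The main obstacle is the non-large-scale-connected case of the backward implication. Proving the splitting $X\approx X_{s(X)}\times\Sigma_X$ requires choosing the trivializing shifts $\gamma_i$ \emph{coherently}, so that for $\delta$-adjacent components the comparison isometries $\gamma_j^{-1}\gamma_i$ displace points of $X_{s(X)}$ by a uniformly bounded amount; controlling this cocycle is exactly where one must exploit that shift-homogeneity makes the local configuration of components uniform, and it is the delicate point of the whole argument. The skeleton-uniqueness lemma is the other genuinely combinatorial step, demanding an honest back-and-forth together with careful bookkeeping of the oscillation functions $\w_f,\w_g$ and the scale inflation they cause. By contrast, the Key Lemma and the entire forward direction reduce, once shift-homogeneity is invoked, to the finiteness of the component set and routine chain estimates.
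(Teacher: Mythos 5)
Your forward direction is, in substance, the paper's own argument: your Key Lemma is exactly the step in the proof of Theorem~\ref{t6.2} where finiteness of the set $\{X_{s(X)}(x'):x'\in X_{\e'}\}$ together with homogeneity yields a retraction $r_X:X_{\e'}\to X_{s(X)}$, and your ``directed systems'' comparison is the same composition $r_Y\circ f$, $r_X\circ g$ performed there. Two caveats. First, the finiteness of the set of $s(X)$-components inside $X_\delta(x)$ is not part of the definition of $s(X)$ (Example~\ref{e3.1} shows $s(X)\in ss(X)$ can fail); you are silently invoking Proposition~\ref{p3.2}, which needs both bounded finiteness and homogeneity --- legitimate under the hypotheses of the theorem, but it should be said. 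Second, you misstate shift-homogeneity: a shift-isometry has finite displacement on each \emph{large scale connected} subset separately, not a finite global displacement $\sup_{z\in X}d_X(z,\gamma(z))$. Your Key Lemma survives, since there you only need the displacement of $\gamma$ on the single component $Z_1$, but the misreading matters later: it makes the cocycle problem look tamer than it is. Your treatment of the degenerate case $s(X)=\infty$ is fine, since large scale connectedness forces a finite factorizing step.

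The backward direction is where you diverge from the paper, and where the genuine gap lies. The paper proves the stronger Factorization Theorem~\ref{t4.1} --- a coarse \emph{isomorphism} $X\cong X_{s(X)}\times X/s(X)$, where $X/s(X)$ is a homogeneous boundedly-finite \emph{ultrametric} space --- and then quotes \cite[Corollary~7]{BZ}, that every unbounded homogeneous proper ultrametric space is coarsely equivalent to the Cantor macro-cube $2^{<\IN}$; together these give Theorem~\ref{t6.1} and hence the non-connected case. Your two deferred lemmas are precisely these two ingredients in disguise: your skeleton $(\Sigma_X,d_H)$ is coarsely isomorphic (via the identity) to the quotient $X/s(X)$ with its ultrametric, because $d_{X/s(X)}\le\max\{s(X),d_H\}$ trivially, while conversely all $\delta$-components of $X$ are isometric and each contains only finitely many $s(X)$-components, so the Hausdorff distances between $s(X)$-components inside a common $\delta$-component are bounded by a constant $C(\delta)$ independent of the component. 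The gap is that you do not supply the mechanism for the splitting $X\approx X_{s(X)}\times\Sigma_X$. Choosing the shifts $\gamma_i$ so that the cocycles $\gamma_j^{-1}\gamma_i$ are uniformly bounded on $X_{s(X)}$ over all $\delta$-adjacent pairs is the right target, but shift-homogeneity only bounds the displacement of each \emph{individual} shift on each large scale connected set, with no uniformity over infinitely many independent choices; ``uniformity of the local configuration'' does not coherentize them. The paper's resolution, in the proof of Theorem~\ref{t4.1}, is a scale recursion: fix an exhaustion $X_{\e_0}\subset X_{\e_1}\subset\cdots$, choose one shift $s_x$ for each $\e_n$-component newly appearing in $X_{\e_{n+1}}$, and set $f_{n+1}=s_x\circ f_n\circ p_n$ on the new blocks. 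Because the quotient ultrametric records the stage at which components merge, the oscillation of the limit map at any fixed finite scale involves only finitely many shifts; this scale-dependent control is exactly what a coarse isomorphism requires, and only such per-scale control (not one global cocycle bound) should be expected. That recursion is the missing idea. Finally, your skeleton-uniqueness lemma must assume the space infinite (your $\Sigma_X$ is, since $X$ is not large scale connected), and it is not a routine back-and-forth: after the reduction above it is exactly \cite[Corollary~7]{BZ}, a theorem the paper deliberately cites rather than reproves. So your architecture is sound, and would yield a more self-contained exposition than the paper's, but as written both pillars of the non-connected case are asserted rather than proved, and the first lacks its key construction.
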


Now we discuss the notion of shift-homogeneity appearing in this theorem.

A metric space $(X,d_X)$ is called {\em shift-homogeneous} if for any points $x,y\in X$ there is a bijective isometry $f:X\to X$ such that $f(x)=y$ and for any large scale connected subset $C\subset X$ the distance $d_X(f|C,\id)=\sup_{x\in C}d_X(f(x),x)$ is finite. Such isometry $f$ will be called a {\em shift-isometry} of $X$.

It is clear that each shift-homogeneous metric space is homogeneous. The converse is true if each large scale connected subset of $X$ is bounded.
In Proposition~\ref{p9.5} we shall prove that each countable locally finite-by-abelian group $G$ endowed with a proper left-invariant metric is shift-homogeneous.

Theorem~\ref{t1.3} yields a classification of shift-homogeneous metric spaces up to the coarse equivalence. Their classification up to the coarse isomorphism is more delicate and uses the notion of a factorizing function.

The {\em factorizing function} $\fact_X:\Pi\to\w\cup\{\infty\}$ of a boundedly-finite homogeneous metric space $X$ assigns to each prime number $p\in\Pi$ the (finite or infinite) number $$\fact_X(p)=\sup\big\{k\in\w:p^k\mbox{ divides the cardinality $|\{X_{s(X)}(x):x\in X_\delta\}|$ for some finite $\delta$}\big\}.$$
For a function $\fact:\Pi\to\w\cup\{\infty\}$ let
$$\IZ_\fact=\bigoplus_{p\in\Pi}\IZ_p^{\fact(p)}$$be the direct sum of cyclic groups $\IZ_p=\IZ/p\IZ$. If $\fact(p)=\infty$ then by $\IZ_p^\infty$ we understand the direct sum of countably many cyclic groups of order $p$. The group $\IZ_\fact$ will be endowed with a proper left-invariant metric.
It is easy to show that the factorizing step $s(\IZ_{\fact})=0$ and the factorizing function of the space $\IZ_\fact$ is equal to $\fact$.

Given two metric spaces $(X,d_X)$ and $(Y,d_Y)$ we endow their product $X\times Y$ with the metric
$$d_{X\times Y}\big((x,y),(x',y')\big)=\max\{d_X(x,x'),d_Y(y,y')\}.$$

Theorem~\ref{t1.3} will be derived from the following theorem proved in Section~\ref{s5}.

\begin{theorem}\label{t1.4} Each boundedly-finite shift-homogeneous metric space $X$ is coarsely isomorphic to the product $X_{s(X)}\times \IZ_{\fact_X}$.
\end{theorem}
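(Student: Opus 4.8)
The plan is to realize $X$ as a ``coarse bundle'' over the space of its $s(X)$-connected components, with fibre the factorizing component, and then to identify the base of this bundle with $\IZ_{\fact_X}$. Write $s=s(X)$ and $F=X_{s(X)}$; we may assume $s<\infty$, since for $s=\infty$ one has $X_{s(X)}=X$, $\fact_X\equiv 0$, and $\IZ_{\fact_X}$ a single point, making the statement trivial. Introduce the quotient set $Q=\{X_s(x):x\in X\}$ of $s$-connected components, metrized (up to coarse isomorphism) by the ultrametric $d_Q\big(X_s(x),X_s(y)\big)=\inf\{\delta\ge s:y\in X_\delta(x)\}$. Every shift-isometry of $X$ permutes $s$-components and descends to an isometry of $Q$, so shift-homogeneity of $X$ makes $Q$ a boundedly-finite homogeneous space; by the very definition of $s$, each $\delta$-connected component of $Q$ is finite, whence $s(Q)=0$, and comparing cardinalities gives $\fact_Q=\fact_X$. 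The proof then splits into two independent assertions: (A) $X$ is coarsely isomorphic to $F\times Q$; and (B) $Q$ is coarsely isomorphic to $\IZ_{\fact_X}$.

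For (B) I would fix a cofinal sequence of scales $\delta_1<\delta_2<\cdots\to\infty$ and work with the induced nested exhaustions of $Q$ and of $\IZ_{\fact_X}$ by finite $\delta$-components, respectively by finite subgroups. Both spaces are, up to coarse isomorphism, ultrametric: the distance is essentially the least scale at which two points share a component, so a bijection is a coarse isomorphism as soon as it carries each finite component of one filtration into a finite component of the other and conversely. Since $\fact_Q=\fact_X=\fact$, the prime-power content of the component cardinalities $|Q_\delta|$ and of the finite-subgroup orders of $\IZ_\fact=\bigoplus_p\IZ_p^{\fact(p)}$ have the same suprema $\fact(p)$; hence the two nondecreasing cardinality sequences are mutually cofinal, and a greedy point-by-point enumeration produces the required bi-coarse bijection.

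The substance lies in (A), where I would build an explicit trivialization. For every $s$-component $A$ choose a shift-isometry $\hat\theta_A$ of $X$ with $\hat\theta_A(F)=A$ (possible by shift-homogeneity, a shift-isometry carrying $s$-components to $s$-components), set $\theta_A=\hat\theta_A|_F\colon F\to A$, and define $\Phi\colon X\to F\times Q$ by $\Phi(x)=(\theta_A^{-1}(x),A)$ with $A=X_s(x)$. This $\Phi$ is a bijection, and verifying that $\Phi,\Phi^{-1}$ are coarse reduces, through the product metric, to two points. The $Q$-coordinate is immediate, since $d_X(x,x')\le\delta$ forces $d_Q(A,A')\le\delta$. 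The $F$-coordinate is controlled once one bounds, by a function of $\delta$ alone, the displacement on $F$ of the transition shift-isometry $\hat\theta_{A'}^{-1}\psi\hat\theta_A$, where $\psi$ is a shift-isometry with $\psi(A)=A'$. One input here is a uniform connecting estimate: for each $\delta$ there is a finite $C(\delta)$ such that any two $s$-components $A,A'$ in a common $\delta$-component are joined by a shift-isometry $\psi$ with $\sup_{u\in A}d_X(u,\psi(u))\le C(\delta)$; this follows by transporting, via a shift-isometry $g$ and the conjugation $\psi=g\sigma g^{-1}$, the finitely many relative positions occurring among the $n(\delta)<\infty$ $s$-components of a single $\delta$-component.

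The genuine obstacle is to choose the family $\{\hat\theta_A\}$ \emph{coherently}. The transition maps $\hat\theta_{A'}^{-1}\psi\hat\theta_A$ automatically fix $F$ setwise and hence, being shift-isometries restricted to the large scale connected set $F$, have finite displacement on $F$; but with arbitrary choices this displacement need not be bounded uniformly in the \emph{location} of $(A,A')$, and a single basepoint exhaustion fails because components far from $F$ accumulate an unbounded twist. The remedy I propose is a hierarchical trivialization built over the scales $\delta_k$: at each scale pick representatives of the $\delta_k$-components and transport one fixed local pattern across all $\delta_k$-components by shift-isometries, using homogeneity of $Q$, so that the per-scale transition bound $K_k\le 2\sum_{j\le k}C(\delta_j)$ is independent of location. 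I expect this cocycle-trivializing construction to be the main difficulty. Once it is in place, the estimate $\w_\Phi(\delta)\le K_{k(\delta)}+\delta+C(\delta)<\infty$ (with $\delta_{k(\delta)}\ge\delta$) and its symmetric counterpart for $\Phi^{-1}$ finish (A), and composing the coarse isomorphisms of (A) and (B) with $F=X_{s(X)}$ yields $X\cong X_{s(X)}\times\IZ_{\fact_X}$.
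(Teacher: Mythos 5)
Your proposal is correct and follows essentially the paper's own route: your step (A) is the paper's Factorization Theorem~\ref{t4.1}, which is proved by exactly the hierarchical construction you propose (an inductive extension over an unbounded scale sequence $\e_0<\e_1<\cdots$, the chart on each new $\e_n$-component being the already-built map pushed forward by a shift-isometry $s_x$, whose finite displacement on the large scale connected set $X_{\e_n}$ controls the transitions), while your step (B) re-derives the paper's Corollary~\ref{c2.2}, i.e.\ the classification of homogeneous countable proper ultrametric spaces quoted from \cite{BZ}, which the paper simply cites. One small correction: the paper's construction \emph{is} a single-basepoint exhaustion $X_{\e_0}(\theta)\subset X_{\e_1}(\theta)\subset\cdots$, and the ``accumulated twist'' you fear is harmless once the charts are chosen inductively rather than independently, because for each fixed $\delta$ any pair at distance $\le\delta$ lies, from the stage with $\e_n\ge\delta$ on, inside a single block moved by one isometry, so $\w_f(\delta)$ stabilizes at a finite value (also note that the finiteness of $\delta$-components of $Q$ needs Proposition~\ref{p3.2}, not just the definition of $s(X)$, cf.\ Example~\ref{e3.1}).
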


Now we consider the problem of classification of shift-homogeneous metric spaces up to coarse isomorphisms. We shall obtain such classification only for amenable shift-homogeneous metric spaces.

A metric space $X$ is called {\em amenable} if for each positive $\e<\infty$ and $c>1$ there is a finite subset $F\subset X$ whose $\e$-neighborhood $O_\e(F)=\{x\in X:\exists y\in F\;d_X(x,y)\le\e\}$ is finite and has cardinality $|O_\e(F)|\le c\cdot |F|$. Amenable metric spaces will be discussed in Section~\ref{s7}. In particular, we shall prove that a countable group $G$ is amenable (which means that $G$ carries a right-invariant finitely additive probability measure defined on the algebra of all subsets of $G$) if and only if $G$ is amenable as a metric space endowed with a proper left-invariant metric.

The following theorem can be considered as a metric counterpart of Theorem~\ref{t1.2} and actually will be used in its proof.
In this theorem, each natural number $n\in\IN$ is considered as the (homogeneous) metric space $n=\{0,\dots,n-1\}$ endowed with a 2-valued metric. The factorizing function $\fact_n:\Pi\to\w$ assigns to each prime number $p$ the largest number $k=\fact_n(p)$ such that $p^k$ divides $n$. So, $n=\prod_{p\in\Pi}p^{\fact_n(p)}$.

\begin{theorem}\label{t1.5} Two amenable shift-homogeneous metric spaces $X,Y$ are coarsely isomorphic if and only if there are natural numbers $n,m\in\IN$ such that
\begin{enumerate}
\item $X_{s(X)}\times n$ is coarsely isomorphic to $Y_{s(Y)}\times m$;
\item $\fact_n\le \fact_X$, $\fact_m\le \fact_Y$, and $\fact_X-\fact_n=\fact_Y-\fact_m$.
\end{enumerate}
\end{theorem}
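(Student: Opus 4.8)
The plan is to reduce everything to Theorem~\ref{t1.4}. First note that since $X$ and $Y$ are amenable and homogeneous they are automatically boundedly-finite: for each $\e$ the finite set $F$ witnessing amenability has finite $\e$-neighborhood, so each $\e$-ball inside $F$ is finite, and homogeneity spreads finiteness to all $\e$-balls; hence Theorem~\ref{t1.4} applies and the factorizing functions $\fact_X,\fact_Y$ are defined. Thus I may replace $X,Y$ by $X_{s(X)}\times\IZ_{\fact_X}$ and $Y_{s(Y)}\times\IZ_{\fact_Y}$. The algebraic tool I would use repeatedly is the following splitting: if $n=\prod_{p\in\Pi}p^{\fact_n(p)}$ is a natural number with $\fact_n\le\fact$, then $\IZ_\fact$ decomposes as $\IZ_{\fact_n}\oplus\IZ_{\fact-\fact_n}$, where $\IZ_{\fact_n}$ is a finite group of cardinality $n$. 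Since any two finite metric spaces of equal cardinality are coarsely isomorphic (every bijection between finite spaces has finite oscillation), $\IZ_{\fact_n}$ is coarsely isomorphic to the $2$-valued space $n$; and since isomorphic countable groups carry coarsely isomorphic proper left-invariant metrics, I obtain for any metric space $A$ a coarse isomorphism $A\times\IZ_\fact\cong A\times n\times\IZ_{\fact-\fact_n}$.

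For the implication $(1)\wedge(2)\Rightarrow$ coarse isomorphism, I apply this splitting to both spaces:
$$X\ \cong\ X_{s(X)}\times\IZ_{\fact_X}\ \cong\ (X_{s(X)}\times n)\times\IZ_{\fact_X-\fact_n},\qquad Y\ \cong\ (Y_{s(Y)}\times m)\times\IZ_{\fact_Y-\fact_m}.$$
By~(2) the residual factors coincide, $\IZ_{\fact_X-\fact_n}=\IZ_{\fact_Y-\fact_m}$, while~(1) provides a coarse isomorphism $X_{s(X)}\times n\cong Y_{s(Y)}\times m$; taking the product of these two coarse isomorphisms yields $X\cong Y$. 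No use of amenability is made in this direction.

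For the converse, assume $h\colon X\to Y$ is a coarse isomorphism, with $X=X_{s(X)}\times\IZ_{\fact_X}$ and $Y=Y_{s(Y)}\times\IZ_{\fact_Y}$. The crucial point is to establish $\fact_X=^*\fact_Y$. Granting this, the set $P=\{p\in\Pi:\fact_X(p)\ne\fact_Y(p)\}$ is finite and avoids the primes at which either function is infinite, so $\fact_n(p)=\max\{0,\fact_X(p)-\fact_Y(p)\}$ and $\fact_m(p)=\max\{0,\fact_Y(p)-\fact_X(p)\}$ define finite numbers $n=\prod_pp^{\fact_n(p)}$ and $m=\prod_pp^{\fact_m(p)}$ satisfying $\fact_X-\fact_n=\min\{\fact_X,\fact_Y\}=\fact_Y-\fact_m$, which is exactly condition~(2). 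To prove $\fact_X=^*\fact_Y$ I would track how $h$ and $h^{-1}$ move the component towers: the bounded oscillations $\w_h$ and $\w_{h^{-1}}$ force the factorizing components of $X$ and $Y$ to correspond at comparable scales, so that the counts $|\{X_{s(X)}(x):x\in X_\delta\}|$ and their analogues for $Y$ agree asymptotically up to a bounded multiplicative factor. Amenability is precisely what promotes this rough correspondence to genuine (in)equalities of cardinalities, via an asymptotic density invariant under coarse isomorphism, and the bounded multiplicative discrepancy that survives is the origin of the finite corrections $n,m$.

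It remains to verify~(1). Writing $c=\min\{\fact_X,\fact_Y\}$ and splitting off the common factor $\IZ_c$ as in the first paragraph converts $h$ into a coarse isomorphism $(X_{s(X)}\times n)\times\IZ_c\cong(Y_{s(Y)}\times m)\times\IZ_c$, and it remains to cancel $\IZ_c$. Since $\IZ_c$ has factorizing step $0$ while $X_{s(X)}\times n$ and $Y_{s(Y)}\times m$ are large scale connected, each of these products has $X_{s(X)}\times n$ (respectively $Y_{s(Y)}\times m$) as its factorizing component up to coarse isomorphism, so the cancellation should follow from the invariance of the factorizing component under coarse isomorphisms of amenable shift-homogeneous spaces. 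I expect this cancellation to be the main obstacle and the step genuinely requiring amenability: for non-amenable spaces a common factor may be absorbed paradoxically, and ruling this out should rest on the amenability results of Section~\ref{s7}.
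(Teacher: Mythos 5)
Your ``if'' direction is correct and is essentially the paper's own argument: it factorizes via Theorem~\ref{t4.1}, splits off the finite group $\IZ_{\fact_n}$ (coarsely isomorphic to the $2$-valued space $n$) using Theorem~\ref{t2.1}, and matches the paper's chain $X\cong X_{s(X)}\times n\times Z\cong Y_{s(Y)}\times m\times Z\cong Y$ with $Z=\IZ_{\fact_X-\fact_n}=\IZ_{\fact_Y-\fact_m}$; you are also right that amenability is not used there.

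The genuine gap is in the ``only if'' direction, and it is exactly the step you flag yourself. The theorem demands a \emph{single} pair $(n,m)$ satisfying (1) and (2) simultaneously. You manufacture $(n,m)$ from the factorizing functions alone (the pointwise-minimal pair making (2) true) and are then forced to verify (1) for that specific pair, which reduces to cancelling the common factor $\IZ_c$ from a coarse isomorphism $(X_{s(X)}\times n)\times\IZ_c\cong(Y_{s(Y)}\times m)\times\IZ_c$. No such cancellation lemma exists in the paper, and your proposed basis for it -- ``invariance of the factorizing component under coarse isomorphisms'' -- is precisely what fails in this setting: a coarse isomorphism can carry one factorizing component onto a union of several components of the target, as the paper's own example shows ($\IZ$ is coarsely isomorphic to $\IZ\times\IZ_k$ for every $k$, \cite[5.5]{BHZ}, invoked in Section~\ref{s10}); this non-invariance is the very reason the parameters $n,m$ appear in the statement at all. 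Consequently the minimal pair need not be the right one -- condition (1) may hold for some admissible $(n,m)$ and fail for the minimal pair whenever the factorizing components do not absorb finite factors -- so even a complete proof of your intermediate claim $\fact_X=^*\fact_Y$ would not close the argument. Moreover that claim itself is only sketched: your ``asymptotic density invariant'' is the correct mechanism, but it becomes a proof only through Lemma~\ref{l7.4}, where the invariant measure of Proposition~\ref{p7.1} forces the fibers of the induced map between component spaces to have constant finite cardinality.

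The paper avoids both problems by extracting $n$ and $m$ from the coarse isomorphism itself. Given $i:X\to Y$ (with the degenerate case $s(X)=\infty$ handled separately by $n=m=1$, a case your reduction via Theorem~\ref{t1.4} passes over silently), it composes $i$ with the factorization isomorphisms $f,g$ of Theorem~\ref{t4.1}, sets $\e=\max\{\w_{i\circ f}(s(X)),\w_g(s(Y))\}$, and applies Lemma~\ref{l7.5} to both $i\circ f$ and $g$ to obtain an $n$-to-$1$ map $X/s(X)\to Y/\e$ and an $m$-to-$1$ map $Y/s(Y)\to Y/\e$ over the \emph{common} quotient $Y/\e$. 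Condition (1) is then read off by restricting the two coarse isomorphisms over a single $\e$-connected component ($Y_\e$ is coarsely isomorphic to both $X_{s(X)}\times n$ and $Y_{s(Y)}\times m$), and condition (2) follows from the counting identities $\fact_{X/s(X)}=\fact_n+\fact_{Y/\e}$ and $\fact_{Y/s(Y)}=\fact_m+\fact_{Y/\e}$ of Claim~\ref{cl:rev}, with no cancellation ever needed. To repair your proof you would have to replace the minimal-pair-plus-cancellation strategy by this (or an equivalent) construction of $(n,m)$ from the isomorphism.
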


\section{Coarse classification of homogeneous ultrametric spaces}\label{s2}

In the proof of Theorems~\ref{t1.2}--\ref{t1.5} we shall use the coarse classification of homogeneous boundedly-finite ultrametric spaces obtained in \cite{BZ}. We recall that a metric space $X$ is called an {\em ultrametric space} if its metric $d_X$ satisfies the strong triangle inequality $$d_X(x,y)\le\max\{d_X(x,z),d_X(z,y)\}$$for all points $x,y,z\in X$.

By \cite{BZ}, any two unbounded homogeneous proper ultrametric spaces $X,Y$ are coarsely equivalent. Moreover, if these spaces are uncountable, then they are coarsely isomorphic.

The classification of {\em countable} homogeneous proper ultrametric spaces up to coarse isomorphism is more complicated. First observe that the Baire Theorem implies that each countable homogeneous proper ultrametric  space is boundedly-finite.

Observe also that for a point $x$ of an ultrametric space $X$ and any positive $\e$ the $\e$-connected component $X_\e(x)$ coincides with the closed $\e$-ball $O_\e(x)=\{y\in X:d_X(x,y)\le \e\}$ centered at $x$.  This implies that the factorizing step $s(X)$ of a boundedly-finite ultrametric space $X$ is equal to zero, and the factorizing function $\fact_X:\Pi\to\w\cup\{\infty\}$ of $X$ assigns to each prime number $p\in\Pi$ the number
$$\fact_X(p)=\sup\{k\in\w:\mbox{$p^k$ divides $|O_\e(x)|$ for some $x\in X$ and some $\e<\infty$}\}.$$

Now Theorem 9 of \cite{BZ} implies the following classification result:

\begin{theorem}\label{t2.1} Two homogeneous countable proper ultrametric spaces $X,Y$ are coarsely isomorphic if and only if $\fact_X=\fact_Y$.
\end{theorem}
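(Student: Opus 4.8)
The plan is to translate the statement into a purely combinatorial assertion about cardinalities of balls, and then to match this with the classification encoded by Theorem 9 of \cite{BZ}. First I would record the ball structure. Since $X$ is a countable homogeneous proper ultrametric space it is boundedly-finite, and, as noted in the excerpt, each $\e$-connected component is the closed ball $O_\e(x)$, which is finite. For fixed $\e$ all balls $O_\e(x)$ have the same cardinality (any point may be moved to any other by an isometry, and an isometry carries balls bijectively onto balls), and whenever $\e\le\e'$ the ball $O_{\e'}(x)$ is partitioned into balls of radius $\e$, all of cardinality $|O_\e(x)|$; hence $|O_\e(x)|$ divides $|O_{\e'}(x)|$. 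Letting $1=n_0<n_1<n_2<\cdots$ enumerate the distinct ball cardinalities of $X$, we obtain a divisibility chain $n_{j}\mid n_{j+1}$, and by definition $\fact_X(p)=\sup_j v_p(n_j)$, where $v_p$ denotes the exponent of the prime $p$. Because the chain is increasing for divisibility, $\fact_X$ is exactly the datum of the supernatural number $N_X=\prod_{p\in\Pi}p^{\fact_X(p)}$, the divisibility supremum of $(n_j)$. Consequently $\fact_X=\fact_Y$ if and only if $N_X=N_Y$, which holds if and only if the chains $(n_j)$, $(m_k)$ of ball cardinalities of $X$, $Y$ are \emph{mutually cofinal for divisibility}: for every $j$ some $m_k$ is divisible by $n_j$, and for every $k$ some $n_j$ is divisible by $m_k$.

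For the necessity I would argue directly. Let $f\colon X\to Y$ be a coarse isomorphism. Fix $\e$ and, using coarseness of $f$, choose $\delta$ with $f(O_\e(x))\subseteq O_\delta(f(x))$ for all $x$. The balls $O_\delta(y)$ partition $Y$, and each $\e$-ball $O_\e(x)$ has image inside the single $\delta$-ball $O_\delta(f(x))$; hence every $\e$-ball is contained in exactly one preimage set $f^{-1}(O_\delta(y))$. Thus the partition of $X$ into $\e$-balls refines the partition $\{f^{-1}(O_\delta(y)):y\in Y\}$, and since all $\e$-balls have the same cardinality $n_j$ while $f$ is a bijection, each set $f^{-1}(O_\delta(y))$ of cardinality $m_k=|O_\delta(y)|$ is a disjoint union of $\e$-balls. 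Therefore $n_j\mid m_k$. Applying the same reasoning to the coarse isomorphism $f^{-1}$ yields, for each $m_k$, some $n_j$ with $m_k\mid n_j$. Hence $(n_j)$ and $(m_k)$ are mutually cofinal for divisibility, so $\fact_X=\fact_Y$ by the first paragraph.

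For the sufficiency I would construct a coarse isomorphism from the equality $N_X=N_Y$. Discarding the trivial bounded (finite) case, I may assume $X,Y$ are unbounded, so both chains are infinite. Using mutual cofinal divisibility I would choose cofinal sequences of radii realizing an interleaved chain $n^{(1)}\mid m^{(1)}\mid n^{(2)}\mid m^{(2)}\mid\cdots$ of ball cardinalities, and then build a bijection $f\colon X\to Y$ by a level-wise back-and-forth: at the stage governed by $n^{(t)}\mid m^{(t)}$ each $X$-ball of cardinality $n^{(t)}$ is sent injectively into a single $Y$-ball of cardinality $m^{(t)}$, while the divisibility $m^{(t)}\mid n^{(t+1)}$ is used at the next stage to keep preimages of $Y$-balls inside single $X$-balls. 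By construction $f$ maps balls into balls at each scale, so both $f$ and $f^{-1}$ have finite oscillation and $f$ is a coarse isomorphism. Alternatively one may simply invoke Theorem 9 of \cite{BZ}, which produces a coarse isomorphism of any two such spaces sharing its classifying invariant; the first paragraph identifies that invariant with $\fact$.

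The main obstacle I anticipate is the bookkeeping in the sufficiency construction: arranging the level-wise matching so that it is simultaneously a global bijection and two-sidedly coarse, even though the finite ball cardinalities of $X$ and $Y$ need never coincide although they determine the same supernatural number. A clean way to handle this is a Hall-type back-and-forth marriage argument between the two systems of balls along the interleaved chain, which is exactly the technical heart isolated in Theorem 9 of \cite{BZ}; the necessity direction, by contrast, is elementary once the refinement observation above is in place.
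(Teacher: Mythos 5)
Your proposal is correct, and it is genuinely more self-contained than the paper's argument, whose entire proof of Theorem~\ref{t2.1} consists of the single citation to Theorem~9 of \cite{BZ}, preceded by the same preparatory observations you make (Baire gives bounded finiteness, $\e$-components coincide with closed $\e$-balls $O_\e(x)$, so $s(X)=0$ and $\fact_X$ is computed from ball cardinalities). What you add beyond the paper: (i) the reformulation of $\fact_X$ as the supernatural number $N_X=\prod_{p\in\Pi}p^{\fact_X(p)}$, the divisibility supremum of the chain $n_0\mid n_1\mid n_2\mid\cdots$ of ball cardinalities, together with the correct remark that, \emph{because} the chain is increasing for divisibility, $\fact_X=\fact_Y$ is equivalent to mutual cofinality of the two chains for divisibility (for a fixed $n_j=p_1^{a_1}\cdots p_r^{a_r}$ one picks $k_i$ with $v_{p_i}(m_{k_i})\ge a_i$ and takes $k=\max_i k_i$); and (ii) a fully elementary proof of necessity: each $\e$-ball of $X$ has all its points within $\e$ of each other, hence maps under a coarse isomorphism $f$ into a single $\w_f(\e)$-ball of $Y$, so the $\e$-ball partition of $X$ refines $\{f^{-1}(O_\delta(y)):y\in Y\}$; homogeneity makes ball cardinalities constant and bijectivity gives $|f^{-1}(O_\delta(y))|=|O_\delta(y)|$, whence $n_j\mid m_k$, and symmetrically for $f^{-1}$. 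This is sound and replaces half of the black-box citation by a short direct argument. Sufficiency is the one place where you defer rigor: the level-wise back-and-forth along an interleaved chain $n^{(1)}\mid m^{(1)}\mid n^{(2)}\mid m^{(2)}\mid\cdots$ is exactly the technical content of \cite[Theorem 9]{BZ}, and your fallback citation there coincides with the paper's whole proof; to make your sketch stand alone you would still need the bookkeeping that makes the ball-respecting assignment globally bijective (within each $X$-ball of cardinality $n^{(t+1)}$, group its $n^{(t+1)}/n^{(t)}$ sub-balls into blocks of $m^{(t)}/n^{(t)}$, possible since $m^{(t)}\mid n^{(t+1)}$, and match blocks to $Y$-balls of cardinality $m^{(t)}$ compatibly across stages), plus the easy remark that $N_X$ is finite if and only if $X$ is bounded, which justifies discarding the bounded case and rules out a bounded--unbounded mismatch. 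Net effect: your necessity direction is more elementary and self-contained than the paper's; your sufficiency is the paper's route.
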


This theorem implies:

\begin{corollary}\label{c2.2} Each homogeneous countable proper ultrametric space $X$ is coarsely isomorphic to the abelian group $\IZ_{\fact_X}$ (endowed with a proper left-invariant metric).
\end{corollary}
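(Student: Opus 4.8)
The plan is to derive Corollary~\ref{c2.2} directly from Theorem~\ref{t2.1} by exhibiting the group $\IZ_{\fact_X}$ as a homogeneous countable proper ultrametric space and then checking that its factorizing function coincides with $\fact_X$. Since Theorem~\ref{t2.1} asserts that $\fact$ is a complete invariant for coarse isomorphism within the class of homogeneous countable proper ultrametric spaces, it suffices to verify that both $X$ and $\IZ_{\fact_X}$ lie in this class and that they carry the same factorizing function.

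First I would equip $\IZ_{\fact_X}=\bigoplus_{p\in\Pi}\IZ_p^{\fact_X(p)}$ with a natural ultrametric. For a countable abelian torsion group written as a direct sum of finite cyclic groups, one fixes an increasing exhaustion by finite subgroups $F_0\subset F_1\subset\cdots$ with $\bigcup_n F_n=\IZ_{\fact_X}$, and defines $d(x,y)=\min\{n:x-y\in F_n\}$ (with $d(x,y)=0$ when $x=y$). This is a left-invariant ultrametric: the strong triangle inequality follows because the $F_n$ are nested subgroups, so if $x-z\in F_m$ and $z-y\in F_n$ then $x-y\in F_{\max\{m,n\}}$. The metric is proper (indeed boundedly-finite) because each ball $O_n(0)=F_n$ is a finite group, and it is homogeneous because translations are isometries. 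Thus $\IZ_{\fact_X}$ is a homogeneous countable proper ultrametric space, so Theorem~\ref{t2.1} applies to it.

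Next I would compute $\fact_{\IZ_{\fact_X}}$ and show it equals $\fact_X$. Using the description from the excerpt, $\fact_{\IZ_{\fact_X}}(p)=\sup\{k\in\w:p^k \text{ divides }|O_\e(x)|\text{ for some }x\text{ and }\e<\infty\}$, and by homogeneity and the ultrametric identity $O_\e(x)=X_\e(x)$ we may take $x=0$, so the relevant cardinalities are exactly the orders $|F_n|$. Each $F_n$ is a finite subgroup of $\bigoplus_p\IZ_p^{\fact_X(p)}$, and as $n\to\infty$ the $p$-primary part of $F_n$ exhausts $\IZ_p^{\fact_X(p)}$. When $\fact_X(p)<\infty$ the maximal power of $p$ dividing any $|F_n|$ is $p^{\fact_X(p)}$, coming from the finite group $\IZ_p^{\fact_X(p)}$ of order $p^{\fact_X(p)}$; when $\fact_X(p)=\infty$ the $p$-parts of $|F_n|$ are unbounded powers of $p$. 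In both cases the supremum defining $\fact_{\IZ_{\fact_X}}(p)$ equals $\fact_X(p)$, so $\fact_{\IZ_{\fact_X}}=\fact_X$.

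Finally, since both $X$ and $\IZ_{\fact_X}$ are homogeneous countable proper ultrametric spaces with the same factorizing function, Theorem~\ref{t2.1} yields that they are coarsely isomorphic, which is the claim. I do not expect a serious obstacle here: the only points requiring care are the choice of a subgroup exhaustion realizing the full $p$-primary structure (so that no prime's contribution is truncated by a bad enumeration) and the convention that the factorizing function is read off the ball cardinalities $|F_n|$ rather than the ambient group. The mild subtlety is bookkeeping the $p=\infty$ case to confirm that $\sup$ genuinely returns $\infty$, but this is immediate once one notes that $\IZ_p^\infty$ contains subgroups of order $p^k$ for every $k$.
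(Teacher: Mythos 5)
Your proposal is correct and follows essentially the same route as the paper, which derives the corollary directly from Theorem~\ref{t2.1} together with the (stated but unproved) remark in the introduction that $s(\IZ_\fact)=0$ and $\fact_{\IZ_\fact}=\fact$; your explicit ultrametric $d(x,y)=\min\{n:x-y\in F_n\}$ and the Sylow-type computation of the $p$-parts of $|F_n|$ simply fill in the details the paper leaves to the reader (cf.\ also Proposition~\ref{p2.3}, which guarantees such an ultrametric exists since $\IZ_{\fact_X}$ is locally finite). The only point worth noting is that the corollary is stated for an arbitrary proper left-invariant metric on $\IZ_{\fact_X}$, which your argument covers via the uniqueness of such metrics up to coarse isomorphism \cite{Smith}.
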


We shall also need the following known  result \cite{BDHM}.

\begin{proposition}\label{p2.3} A countable group $G$ carries a proper left-invariant ultrametric if and only if $G$ is locally finite.
\end{proposition}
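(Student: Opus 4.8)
The plan is to prove the two implications separately, in each case exploiting the fact that in a left-invariant ultrametric the closed balls centered at the neutral element $e$ are subgroups of $G$.

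For the implication that a group carrying a proper left-invariant ultrametric $d$ is locally finite, I would first note that since $G$ is countable, the Baire Theorem argument recorded after the definition of boundedly-finite spaces shows that $(G,d)$ is boundedly-finite, so every ball $O_\e(e)$ with $\e<\infty$ is finite. Next I would check that each such ball is a subgroup: if $g,h\in O_\e(e)$ then left-invariance gives $d(e,gh)\le\max\{d(e,g),d(g,gh)\}=\max\{d(e,g),d(e,h)\}\le\e$ and $d(e,g^{-1})=d(g,e)=d(e,g)\le\e$, so $gh,g^{-1}\in O_\e(e)$. Given any finite subset $F\subset G$, choosing $\e=\max_{f\in F}d(e,f)<\infty$ places $F$ inside the finite subgroup $O_\e(e)$, whence $\langle F\rangle\subseteq O_\e(e)$ is finite. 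Thus every finitely generated subgroup is finite and $G$ is locally finite.

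For the converse, I would use the standard fact that a countable locally finite group is an increasing union of finite subgroups: enumerating $G=\{g_1,g_2,\dots\}$ and putting $G_0=\{e\}$ and $G_n=\langle g_1,\dots,g_n\rangle$, local finiteness makes each $G_n$ finite, while $G=\bigcup_{n\in\w}G_n$. I would then define a length function $\ell:G\to\w$ by $\ell(g)=\min\{n\in\w:g\in G_n\}$ and set $d(g,h)=\ell(g^{-1}h)$. Left-invariance is immediate, symmetry follows from $\ell(x)=\ell(x^{-1})$ (each $G_n$ being a subgroup), and $d(g,h)=0$ iff $g=h$ because $G_0=\{e\}$. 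The strong triangle inequality reduces, after the substitution $x=g^{-1}k$, $y=k^{-1}h$, to $\ell(xy)\le\max\{\ell(x),\ell(y)\}$, which holds because $x,y\in G_{\max\{\ell(x),\ell(y)\}}$ and this set, being a subgroup, also contains $xy$. Finally, every bounded set lies in some $O_N(e)=\{g:\ell(g)\le N\}=G_N$, which is finite, so $(G,d)$ is boundedly-finite and hence proper.

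The computations here are all routine; the single point requiring care is the opening reduction in the forward direction, namely that countability together with properness yields bounded-finiteness, which is precisely the Baire-category observation already recorded in the excerpt. The conceptual heart of the argument, used in both directions, is the identification of the balls $O_\e(e)$ of a left-invariant ultrametric with subgroups of $G$, which converts the metric data into a filtration by finite subgroups and back again.
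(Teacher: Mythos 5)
Your proof is correct, but note that the paper itself offers no argument for Proposition~\ref{p2.3}: it is quoted as a known result with a citation to the paper \emph{Dimension zero at all scales} by Brodskiy, Dydak, Higes and Mitra, so there is no in-text proof to compare against. Your two directions both check out. The forward implication correctly combines the Baire-category observation recorded in the paper (a countable proper homogeneous space is boundedly-finite, and $(G,d)$ is homogeneous because left shifts are isometries) with the key algebraic fact that for a left-invariant ultrametric each ball $O_\e(1_G)$ is a subgroup: the verification $d(1_G,gh)\le\max\{d(1_G,g),d(g,gh)\}=\max\{d(1_G,g),d(1_G,h)\}$ and $d(1_G,g^{-1})=d(g,1_G)$ is exactly right, and finiteness of the balls then traps every finitely generated subgroup inside a finite subgroup. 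The converse construction via the filtration $G_0=\{1_G\}\subset G_1\subset G_2\subset\cdots$ with $G_n=\langle g_1,\dots,g_n\rangle$ and the length function $\ell(g)=\min\{n\in\w:g\in G_n\}$, $d(g,h)=\ell(g^{-1}h)$, is the standard one; your reduction of the strong triangle inequality to $\ell(xy)\le\max\{\ell(x),\ell(y)\}$ via membership of $x,y$ in the subgroup $G_{\max\{\ell(x),\ell(y)\}}$ is sound, as is the observation that every bounded set lies in some left translate $xG_N$ of a finite subgroup, giving bounded-finiteness and hence properness. In short, you have supplied a complete elementary proof of a statement the authors chose to import from the literature, and the structural insight you isolate --- that left-invariant ultrametrics on a group correspond to filtrations by subgroups, which under properness and countability must be finite --- is precisely the mechanism underlying the cited result.
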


\section{Factorizing step of a metric space}\label{s3}

In this section we discuss the notion of the factorizing step $s(X)$ of a metric space $X$. We recall that $s(X)=\inf ss(X)$ where $ss(X)$ is the set of all $\e\in[0,\infty]$ such that for each finite $\delta\ge\e$ and $x\in X$ the set $X_\delta(x)/\e=\{X_\e(y):y\in X_\delta(x)\}$ is finite.

In general, the factorizing step $s(X)$ does not belong to the set $ss(X)$.

\begin{example}\label{e3.1} For the closed subspace
$$X=\big\{\big(x+2\pi n,(-1)^n\tg(x)\big):x\in (-\tfrac{\pi}2,\tfrac{\pi}2),\;n\in\IZ\big\}$$of the Euclidean plane we get $ss(X)=(\pi,\infty]$ and hence $s(X)=\pi\notin ss(X)$.
\end{example}

On the other hand, we have

\begin{proposition}\label{p3.2} For any boundedly-finite homogeneous metric space $X$ we get $s(X)\in ss(X)$.
\end{proposition}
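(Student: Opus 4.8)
Write $s=s(X)$ and recall that $s=\inf ss(X)$. Since $\infty\in ss(X)$ holds vacuously, and the statement is trivial when $X$ is finite (all the sets $X_\delta(x)/\e$ are then finite) or when $s=\infty$, the plan is to assume $X$ is infinite and $s<\infty$. The whole argument rests on a single structural fact, which I would establish first: in a boundedly-finite homogeneous space the set of realized distances $\mathcal D=\{d_X(u,v):u,v\in X\}$ is \emph{locally finite}, meaning $\mathcal D\cap[0,R]$ is finite for every finite $R$. Indeed, by homogeneity each distance $d_X(u,v)$ equals $d_X(x_0,f(v))$ for a fixed base point $x_0$ and an isometry $f$ with $f(u)=x_0$; hence $\mathcal D=\{d_X(x_0,w):w\in X\}$, and $\mathcal D\cap[0,R]$ is the image of the closed ball $O_R(x_0)$, which is finite because $X$ is boundedly-finite. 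Thus $\mathcal D$ has no accumulation point in $[0,\infty)$, and (as $X$ is infinite) it is unbounded.

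Next I would exploit this to show that the $\e$-components are constant in $\e$ on a right-neighbourhood of $s$. Put $d^+=\min\big(\mathcal D\cap(s,\infty)\big)$, which is well defined with $d^+>s$, so $(s,d^+)\cap\mathcal D=\emptyset$. I claim that $X_\e(y)=X_s(y)$ for every $y\in X$ and every $\e\in[s,d^+)$. The inclusion $X_s(y)\subseteq X_\e(y)$ is immediate, and conversely any $\e$-chain uses steps of size $\le\e<d^+$; each such step-size lies in $\mathcal D$ and is strictly below $\min(\mathcal D\cap(s,\infty))$, hence is $\le s$, so the chain is in fact an $s$-chain. In parallel I would record that $ss(X)$ is upward closed: if $\e\in ss(X)$ and $\e'\ge\e$, then for finite $\delta\ge\e'$ the $\e'$-partition of $X_\delta(x)$ is coarser than the $\e$-partition, giving $|X_\delta(x)/\e'|\le|X_\delta(x)/\e|<\infty$. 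Combined with $s=\inf ss(X)$, this yields $(s,\infty]\subseteq ss(X)$.

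Finally I would assemble these pieces. Fix a finite $\delta\ge s$ and a point $x$; the goal is $|X_\delta(x)/s|<\infty$. If $\delta=s$ this set is the singleton $\{X_s(x)\}$. If $\delta>s$, pick any $\e\in\big(s,\min\{\delta,d^+\}\big)$; then $\e\in ss(X)$ by the previous paragraph, so $|X_\delta(x)/\e|<\infty$, while $X_\e(y)=X_s(y)$ for all $y$ forces $X_\delta(x)/\e=X_\delta(x)/s$. Hence $|X_\delta(x)/s|<\infty$, and since $\delta,x$ were arbitrary, $s\in ss(X)$.

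The only genuine obstacle is the a priori possibility that the number of $\e$-components of $X_\delta(x)$ blows up as $\e\downarrow s$, which is exactly what lets $s$ escape $ss(X)$ in Example~\ref{e3.1}, where the distances accumulate at $\pi$. Local finiteness of $\mathcal D$ — the one step where both homogeneity and bounded-finiteness are used — defeats this by showing the components do not change at all on $[s,d^+)$; everything after that is bookkeeping.
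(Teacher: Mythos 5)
Your proof is correct and takes essentially the same route as the paper's: both arguments hinge on the single key fact that bounded-finiteness together with homogeneity makes the distance set $d_X(X\times X)$ closed and discrete in $[0,\infty)$, whence the $\varepsilon$-connected components do not change for $\varepsilon$ in a right-neighbourhood of $s(X)$ reaching up to some element of $ss(X)$, forcing $s(X)\in ss(X)$. Your explicit treatment of the trivial cases, the upward closedness of $ss(X)$, and the threshold $d^+$ merely spells out steps the paper leaves tacit.
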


\begin{proof} Since the metric space $X$ is boundedly-finite and homogeneous, the set $d_X(X\times X)=\{d(x,y):x,y\in X\}$ is closed and discrete in $[0,\infty)$. Because of that we can find $s\in ss(X)$ such that the half-interval $(s(X),s]$ does not intersect the set $d(X\times X)$. This implies that $X_{s(X)}(x)=X_s(x)$ for each $x\in X$. Then for each finite $\delta\ge s$ and $x\in X$ the set $\{X_{s(X)}(y):y\in X_\delta(x)\}=\{X_s(x):x\in X_\delta(y)\}$ is finite, witnessing that $s(X)\in ss(X)$.
\end{proof}

Given a homogeneous metric space $X$ and a number $\e\in ss(X)$, let  $X/\e=\{X_\e(x):x\in X\}$ be the space of $\e$-connected components of $X$ endowed with the ultrametric
$$d_{X/\e}\big(X_\e(x),X_\e(y)\big)=\begin{cases} 0&\mbox{if }X_\e(x)=X_\e(y),\\
\inf\{\delta\ge\e: x\in X_\delta(y)\}&\mbox{otherwise}.
\end{cases}$$
It is clear that this ultrametric is well-defined if $\e>0$.

If $\e=0$, then $0=\e\in ss(X)$ implies that all components $X_\delta(x)$, $0\le \delta<\infty$, $x\in X$, are finite. Consequently, $X$ is boundedly-finite and by the homogeneity of $X$, the set $d(X\times X)$ is closed and discrete in $[0,\infty)$. This yields a positive $\delta>0$ such that each closed $\delta$-ball $O_\delta(x)$, $x\in X$, is a singleton, which implies that $X_\delta(x)=\{x\}$ and $d_{X/0}(X_0(x),X_0(y))\ge\delta>0$ iff $X_0(x)\ne X_0(y)$.

It follows from $\e\in ss(X)$ that the ultrametric space $X/\e$ is boundedly-finite. Moreover, the homogeneity of the metric space $X$ implies the homogeneity of the ultrametric space $X/\e$. By Corollary~\ref{c2.2}, the ultrametric space $X/\e$ is coarsely isomorphic to $\IZ_{\fact_{X/\e}}$.

By $q_\e:X\to X/\e$, $q_\e:x\mapsto X_\e(x)$, we shall denote the map assigning to each point $x\in X$ its $\e$-connected component $X_\e(x)$.

The proof of the following simple proposition is left to the reader.

\begin{proposition}\label{p3.3} A metric space $X$ has finite factorizing step $s(X)$ if and only if $X$ is coarsely isomorphic to a metric space $Y$ with finite factorizing step $s(Y)$.
\end{proposition}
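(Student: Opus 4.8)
The plan is to observe that the forward implication is trivial --- one takes $Y=X$ --- so the whole content lies in showing that finiteness of the factorizing step is an invariant of coarse isomorphism. Thus I would fix a coarse isomorphism $f\colon X\to Y$ with inverse $g=f^{-1}$ and record the one structural fact that drives everything: a coarse map carries $\e$-chains to $\w$-chains, so that $f(X_\e(x))\subseteq Y_{\w_f(\e)}(f(x))$ and $g(Y_\e(y))\subseteq X_{\w_g(\e)}(g(y))$ for every finite $\e$ and every point. In particular a coarse map induces, at suitably related scales, a map between spaces of connected components.

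Assuming $s(Y)<\infty$, the infimum defining $s(Y)$ being finite furnishes a finite $\beta\in ss(Y)$. I would then set $\alpha:=\w_g(\beta)$, which is finite because $g$ is coarse, and define the induced map $\bar g\colon Y/\beta\to X/\alpha$ by $\bar g(Y_\beta(y))=X_\alpha(g(y))$. This is well defined precisely because $\w_g(\beta)=\alpha$, so that $\beta$-connected points have $\alpha$-connected $g$-images, and it is surjective because $g$ is a bijection of the underlying sets, every $X_\alpha(x')$ equalling $\bar g(Y_\beta(f(x')))$.

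To finish I would verify $\alpha\in ss(X)$. Fix a finite $\delta\ge\alpha$ and a point $x\in X$. Writing each $z\in X_\delta(x)$ as $z=g(f(z))$ and noting $f(z)\in Y_{\w_f(\delta)}(f(x))$, one sees that every $\alpha$-component meeting $X_\delta(x)$ is the $\bar g$-image of a $\beta$-component meeting $Y_{\w_f(\delta)}(f(x))$; that is, $X_\delta(x)/\alpha\subseteq \bar g\big(Y_{\w_f(\delta)}(f(x))/\beta\big)$. The set $Y_{\w_f(\delta)}(f(x))/\beta$ is finite --- by the defining property of $\beta\in ss(Y)$ when $\w_f(\delta)\ge\beta$, and as a singleton otherwise --- so its image under $\bar g$, and hence $X_\delta(x)/\alpha$, is finite. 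This yields $\alpha\in ss(X)$ and therefore $s(X)\le\alpha<\infty$.

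The main obstacle is the scale-matching, and the key is to push the quotient in the right direction. The naive attempt to transport components by making an induced map $\bar f\colon X/\alpha\to Y/\beta$ \emph{injective} forces the two competing inequalities $\w_f(\alpha)\le\beta$ and $\w_g(\beta)\le\alpha$, which close into a loop that cannot be solved in general. Using instead the \emph{surjectivity} of $\bar g$ in the opposite direction reduces the requirement to the single relation $\alpha=\w_g(\beta)$, satisfied by fiat; the finiteness of $X_\delta(x)/\alpha$ is then inherited from that of $Y_{\w_f(\delta)}(f(x))/\beta$ rather than having to be established by an independent counting argument.
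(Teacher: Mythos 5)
Your proof is correct, and since the paper explicitly leaves the proof of Proposition~\ref{p3.3} to the reader, there is no official argument to compare against; your route --- fixing a finite $\beta\in ss(Y)$, setting $\alpha=\w_g(\beta)$ for $g=f^{-1}$, transporting $\beta$-components backwards via the surjection $\bar g:Y/\beta\to X/\alpha$, and bounding $X_\delta(x)/\alpha$ by the finite set $Y_{\w_f(\delta)}(f(x))/\beta$ --- is exactly the natural argument the authors presumably intended, including the correct treatment of the degenerate case $\w_f(\delta)<\beta$ as a singleton. The only trivial point worth adding is that the forward direction needs the identity map of $X$ to be a coarse isomorphism, which is immediate.
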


\section{Factorization Theorem for shift-homogeneous metric spaces}\label{s4}

The following factorization theorem is the crucial step in the proof of Theorem~\ref{t1.4}.

\begin{theorem}\label{t4.1} For any shift-homogeneous metric space $X$ and any $\e\in ss(X)$ there is a coarse isomorphism $f:X_\e\times X/\e\to X$ such that for each $\e$-connected component $X_\e(x)\in X/\e$ the restriction $f|X_\e\times\{X_\e(x)\}$ is an isometry from $X_\e\times \{X_\e(x)\}$ onto $X_\e(x)\subset X$.
\end{theorem}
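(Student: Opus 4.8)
The plan is to build $f$ directly from shift-isometries and then verify the two coarseness conditions, the second of which carries all the difficulty.

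\emph{Construction.} Fix $x_0\in X$ and put $C_0=X_\e(x_0)$, so $X_\e=C_0$. For every $\e$-connected component $C\in X/\e$ choose (coherently, see below) an isometry $\phi_C\colon C_0\to C$ \emph{onto} $C$ that is the restriction to $C_0$ of a shift-isometry of $X$, with $\phi_{C_0}=\id$; such restrictions exist and are isometries onto components because a shift-isometry sending a point of $C_0$ to a point of $C$ maps the $\e$-component $C_0$ onto the $\e$-component $C$. Define $f\colon X_\e\times X/\e\to X$ by $f(c,C)=\phi_C(c)$. Since the components partition $X$ and each $\phi_C$ bijects $C_0$ onto $C$, the map $f$ is a bijection, and by construction $f|X_\e\times\{C\}=\phi_C$ is an isometry onto $C$. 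It remains to prove that $f$ and $f^{-1}$ are coarse.

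\emph{Reduction.} If $d_{X_\e\times X/\e}\big((c,C),(c',C')\big)\le\delta$, then $d_{C_0}(c,c')\le\delta$ and $d_{X/\e}(C,C')\le\delta$, so the triangle inequality gives $d_X\big(\phi_C(c),\phi_{C'}(c')\big)\le\delta+M_\delta(C,C')$, where $M_\delta(C,C')=\sup_{c\in C_0}d_X\big(\phi_C(c),\phi_{C'}(c)\big)$ is the \emph{transition displacement}. Hence $f$ is coarse once $K(\delta):=\sup\{M_\delta(C,C'):d_{X/\e}(C,C')\le\delta\}<\infty$ for each finite $\delta$. Conversely, if $d_X(\phi_C(c),\phi_{C'}(c'))\le\delta$ then $C,C'$ contain points at distance $\le\delta$, hence lie in a common $\delta$-component and $d_{X/\e}(C,C')\le\delta$; combining this with the bound $K(\delta)$, one more triangle inequality, and the fact that $\phi_{C'}$ is an isometry yields $d_{C_0}(c,c')\le\delta+K(\delta)$. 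Thus $f^{-1}$ is coarse as soon as $f$ is, and everything reduces to the finiteness of $K(\delta)$.

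\emph{The main obstacle.} Each individual $M_\delta(C,C')$ is finite, since $\phi_{C'}\circ\phi_C^{-1}$ is the restriction to the large scale connected set $C$ of a shift-isometry of $X$; the difficulty is \emph{uniformity} over all $\delta$-close pairs, and this is exactly where $\e\in ss(X)$ enters. I will use three facts: (i) since $X/\e$ is boundedly-finite, every ball of finite radius in the ultrametric space $X/\e$ is finite, so a $\delta$-ball contains at most $N(\delta)<\infty$ components and only finitely many distance values $0<r_1<\dots<r_{k_0}\le\delta$ occur; (ii) shift-isometries act on $X/\e$ by isometries and, because in an ultrametric a ball is determined by any of its points, they act transitively on the balls of each fixed radius $r_k$; (iii) the displacement $d_X(h|A,\id)$ of a shift-isometry $h$ on a large scale connected set $A$ is invariant under conjugation by shift-isometries and subadditive under composition.

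\emph{Resolution.} Using (ii) I choose the $\phi_C$ coherently along the nested-ball (tree) structure of $X/\e$: inside each radius-$r_k$ ball one radius-$r_{k-1}$ sub-ball is declared central, and $\phi_C$ is obtained by composing the finitely many model ``centering'' shift-isometries along the chain of ancestor balls joining $C$ to $C_0$. By (ii) and (iii) every centering isometry at level $k$ is conjugate to one of finitely many models and hence has displacement bounded by a constant $K_k<\infty$ depending only on $k$. For a $\delta$-close pair $C,C'$ the ancestor balls of $C$ and $C'$ agree from level $k_0$ upward, so $\phi_{C'}\circ\phi_C^{-1}$ involves only centering isometries at levels $\le k_0$, whose number is bounded in terms of $N(\delta)$; subadditivity then gives $M_\delta(C,C')\le 2\sum_{k\le k_0}K_k=:K(\delta)<\infty$, uniformly in the pair. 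This proves $K(\delta)<\infty$ and completes the argument. The genuinely technical point, and the main obstacle, is arranging this coherent inductive choice so that the transition between $\delta$-close components telescopes to a bounded number of model steps; it is precisely the finiteness of balls supplied by $\e\in ss(X)$ that keeps this number bounded.
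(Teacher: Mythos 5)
Your proposal is correct and is essentially the paper's own argument: the paper likewise builds $f$ slice-by-slice from coherently chosen shift-isometries along a nested filtration (the components $X_{\e_n}(\theta)$ of an exhausting sequence $\e_0=\e<\e_1<\e_2<\dots$, with one shift-isometry $s_x$ per new $\e_n$-component at each level, transported recursively from the central ball), using $\e\in ss(X)$ for finiteness at each level and the finite displacement of shift-isometries on large scale connected sets for coarseness. The only real difference is one of explicitness: where the paper compresses the coarseness verification into ``one can check that the functions $f_n$ are coarse isomorphisms,'' you spell out the key uniformity estimate $K(\delta)<\infty$ via conjugation-invariance and subadditivity of displacement, which is a faithful elaboration of the same mechanism rather than a different route.
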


\begin{proof} Choose an unbounded increasing sequence of real numbers $(\e_n)_{n\in\w}$ with $\e_0=\e$. Fix any point $\theta\in X$ and identify the factorizing components $X_{\e_n}$, $n\in\w$,
with the $\e_n$-connected components $X_{\e_n}(\theta)$, $n\in\w$, of the point $\theta$.

For every $n\in\w$ the set $X_{\e_{n+1}}\setminus X_{\e_n}$ can be written as the disjoint union  $$X_{\e_{n+1}}\setminus X_{\e_n}=\bigcup_{x\in F_{n+1}}X_{\e_n}(x)$$ of $\e_n$-connected components $X_{\e_n}(x)$ of points $x\in F_{n+1}$ of some finite subset $F_{n+1}\subset X_{\e_{n+1}}$.

Using the shift-homogeneity of $X$, for each point $x\in F_{n+1}$ fix a shift-isometry $s_x:X\to X$ of $X$ such that $s_x(\theta)=x$. Then the restriction $s_X|X_{\e_n}:X_{\e_n}\to X_{\e_n}(x)$ is an isometry between the $\e_n$-connected components of $\theta$ and $x$, respectively.
This isometry induces an isometry $\tilde s_x:X_{\e_n}/\e\to X_{\e_n}(x)/\e$ between the subsets $X_{\e_n}/\e=\{X_\e(y):y\in X_{\e_n}\}$ and  $X_{\e_n}(x)/\e=\{X_\e(y):y\in X_{\e_n}(x)\}$ of the ultrametric  space $X/\e$. The latter isometry induces an isometry
$$p_n:X_\e\times X_{\e_n}(x)/\e\to X_\e\times X_{\e_n}/\e,\;\;\;p_n:(y,z)\mapsto(y,\tilde s_x^{-1}(z)).$$
Now we are able to define a coarse isomorphism $f:X_\e\times X/\e\to X$. For this observe that $$X/\e=\{X_\e\}\cup\bigcup_{n\in\w}\bigcup_{x\in F_{n+1}}X_{\e_n}(x)/\e.$$

Let $f_0:X_\e\times \{X_{\e}\}\to X_{\e}=X_{\e_0}$ be the projection onto the first coordinate. For every $n\in\w$ define a function $f_{n+1}:X_\e\times X_{\e_{n+1}}/\e\to X_{\e_{n+1}}$ letting
$f_{n+1}|X_\e\times X_{\e_n}/\e=f_n$ and $$f_{n+1}|X_\e\times X_{\e_n}(x)/\e=s_x\circ f_n\circ p_n$$for every $x\in F_{n+1}$. Taking into account that $s_x$, $x\in F_{n+1}$, are shift-isometries, one can check that the functions $f_n$, $n\in\w$, are coarse isomorphisms.

The functions $f_n$, $n\in\w$, compose a coarse isomorphism $f:X_\e\times X/\e\to X$ such that $f|X_\e\times X_{\e_n}/\e=f_n$ for all $n\in\w$.

It follows from the construction of $f$ that for any $\e$-connected component $X_\e(x)\in X/\e$, the restriction $f|X_\e\times\{X_\e(x)\}$ is an isometry from $X_\e\times \{X_\e(x)\}$ onto $X_\e(x)\subset X$.
\end{proof}

\section{Proof of Theorem~\ref{t1.4}}\label{s5}

Let $X$ is a boundedly-finite shift-homogeneous metric space. By Proposition~\ref{p3.2}, $s(X)\in ss(X)$ and by Theorem~\ref{t4.1}, the space $X$ is coarsely isomorphic to the product $X_{s(X)}\times (X/s(X))$. By definition, the factorizing function $\fact_X$ of $X$ coincides with the factorizing function $\fact_{X/s(X)}$ of the ultrametric space $X$.

By Corollary~\ref{c2.2}, the ultrametric space $X/s(X)$ is coarsely isomorphic to the group $\IZ_{\fact_{X/s(X)}}=\IZ_{\fact_X}$ endowed with a proper left-invariant metric. So, we obtain the following chain of coarse isomorphisms:
$$X\cong X_{s(X)}\times (X/s(X))\cong X_{s(X)}\times \IZ_{\fact_{X/s(X)}}=X_{s(X)}\times \IZ_{\fact_X}.$$

\section{Classification of shift-homogeneous metric spaces up to coarse equivalence}\label{s6}

By \cite[Corollary 7]{BZ}, each unbounded proper homogeneous ultrametric space is coarsely equivalent to the Cantor macro-cube $$2^{<\IN}=\{(x_n)_{n\in\IN}\in \{0,1\}^\IN:\exists m\in\IN\;\forall n\ge m\;\;x_n=0\}$$endowed with the metric
$$d\big((x_n),(y_n)\big)=\max_{n\in\IN}\,2^n|x_n-y_n|.$$ This fact combined with
Theorem~\ref{t4.1} implies the following two classification results for shift-homogeneous metric spaces.

\begin{theorem}\label{t6.1} A shift-homogeneous metric space $X$ for every finite $\e\in ss(X)$ is coarsely equivalent to:
\begin{enumerate}
\item $X_\e$ iff $X$ is large scale connected;
\item $X_\e\times 2^{<\IN}$ iff $X$ is not large scale connected.
\end{enumerate}
\end{theorem}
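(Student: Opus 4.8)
The plan is to deduce everything from the factorization theorem, Theorem~\ref{t4.1}, together with the coarse classification of proper homogeneous ultrametric spaces from \cite{BZ}. Applying Theorem~\ref{t4.1} to the fixed finite $\e\in ss(X)$ yields a coarse isomorphism $f:X_\e\times X/\e\to X$; in particular $X$ is coarsely equivalent to the product $X_\e\times(X/\e)$. Since $\e\in ss(X)$, the quotient $X/\e$ is a boundedly-finite homogeneous ultrametric space. Thus the whole statement reduces to understanding, in coarse terms, the single factor $X/\e$, and I would show that its coarse type is controlled entirely by whether $X$ is large scale connected.

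The key lemma I would prove is that $X$ is large scale connected if and only if the ultrametric space $X/\e$ is bounded. For the forward implication, if $X$ is $\delta$-connected for some finite $\delta$, then I may assume $\delta\ge\e$, so that $X=X_\delta(\theta)$; membership $\e\in ss(X)$ then forces $X/\e=X_\delta(\theta)/\e$ to be finite, hence bounded. Conversely, if $X/\e$ has finite diameter $R$, then every $\e$-component lies within distance $R$ of $X_\e(\theta)$ in the ultrametric $d_{X/\e}$, which by the very definition of $d_{X/\e}$ means that $X=X_{R'}(\theta)$ for any finite $R'>\max\{R,\e\}$; hence $X$ is large scale connected.

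With this dictionary, I would finish as follows. If $X$ is large scale connected, then $X/\e$ is bounded and hence coarsely equivalent to a one-point space; since a product of coarse equivalences is again a coarse equivalence, $X$ is coarsely equivalent to $X_\e\times(X/\e)$ and therefore to $X_\e$. If $X$ is not large scale connected, then $X/\e$ is an unbounded proper homogeneous ultrametric space, so by \cite[Corollary~7]{BZ} it is coarsely equivalent to the Cantor macro-cube $2^{<\IN}$, whence $X$ is coarsely equivalent to $X_\e\times 2^{<\IN}$. For the converse directions of the two equivalences I would invoke the coarse invariance of large scale connectedness: the component $X_\e$ is $\e$-connected and thus large scale connected, while $2^{<\IN}$ is not (its $\delta$-connected components are the proper bounded closed $\delta$-balls), and in the max-metric a product $A\times B$ is large scale connected precisely when both $A$ and $B$ are. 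Hence $X_\e$ is large scale connected and $X_\e\times 2^{<\IN}$ is not, so a space coarsely equivalent to the former must be large scale connected and a space coarsely equivalent to the latter cannot be.

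The only genuinely delicate step is the dictionary of the second paragraph, where one must translate carefully between $\delta$-chains in $X$, membership in the components $X_\delta(\theta)$, and distances in the quotient ultrametric $d_{X/\e}$, keeping track of the role of $\e\in ss(X)$ in guaranteeing finiteness. Everything else is a formal consequence of Theorem~\ref{t4.1}, of \cite[Corollary~7]{BZ}, and of the standard fact that large scale connectedness is preserved under coarse equivalence.
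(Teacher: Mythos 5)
Your proposal is correct and follows essentially the same route as the paper, which proves Theorem~\ref{t6.1} in a single sentence by combining the factorization $X\cong X_\e\times X/\e$ of Theorem~\ref{t4.1} with \cite[Corollary~7]{BZ}, exactly as you do. Your dictionary lemma (that $X$ is large scale connected iff the boundedly-finite homogeneous ultrametric space $X/\e$ is bounded) and the coarse-invariance argument for the converse implications correctly supply the details the paper leaves implicit.
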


The following theorem implies Theorem~\ref{t1.3} announced in the Introduction.

\begin{theorem}\label{t6.2} For two shift-homogeneous metric spaces $X,Y$ with finite factorizing steps $s(X),s(Y)$ the following conditions are equivalent:
\begin{enumerate}
\item $X$ and $Y$ are coarsely equivalent;
\item for some real numbers $\e\in ss(X)$ and $\delta\in ss(Y)$ the spaces $X_\e$, $Y_\delta$ are coarsely equivalent and the spaces $X,Y$ either both are large scale connected or both are not large scale connected;
\item for any real numbers $\e\in ss(X)$ and $\delta\in ss(Y)$ the spaces $X_\e$, $Y_\delta$ are coarsely equivalent and the spaces $X,Y$ either both are large scale connected or both are not large scale connected.
\end{enumerate}
\end{theorem}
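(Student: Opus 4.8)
The plan is to prove the cyclic chain $(2)\Rightarrow(1)\Rightarrow(3)\Rightarrow(2)$. A key ingredient that makes the argument run smoothly is a \emph{stability} property of factorizing components: for finite $\e\le\e'$ in $ss(X)$ the inclusion $X_\e\hookrightarrow X_{\e'}$ is a coarse equivalence. To see this I would note that $X_{\e'}$ is again shift-homogeneous with $\e\in ss(X_{\e'})$, so Theorem~\ref{t4.1} yields a coarse isomorphism $X_{\e'}\cong X_\e\times(X_{\e'}/\e)$ whose restriction to each slice $X_\e\times\{*\}$ is an isometry onto an $\e$-component; since $\e\in ss(X)$ the ultrametric quotient $X_{\e'}/\e$ is finite, hence bounded, and the projection $X_\e\times(X_{\e'}/\e)\to X_\e$ is a coarse equivalence whose composition with that isomorphism is (close to) the inclusion. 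In particular all $X_\e$ with finite $\e\in ss(X)$ are coarsely equivalent. I would also use two routine facts: large scale connectedness is a coarse invariant, and $A\sim B$ implies $A\times Z\sim B\times Z$ for the $\max$-product metric.

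For $(2)\Rightarrow(1)$ I would feed the given finite $\e\in ss(X)$ and $\delta\in ss(Y)$ into Theorem~\ref{t6.1}. If $X,Y$ are both large scale connected, then $X\sim X_\e\sim Y_\delta\sim Y$. If neither is, then $X\sim X_\e\times2^{<\IN}$ and $Y\sim Y_\delta\times2^{<\IN}$, and the hypothesis $X_\e\sim Y_\delta$ promotes to $X_\e\times2^{<\IN}\sim Y_\delta\times2^{<\IN}$, whence $X\sim Y$. The implication $(3)\Rightarrow(2)$ is immediate, since $s(X)=\inf ss(X)$ and $s(Y)=\inf ss(Y)$ are finite and so $ss(X),ss(Y)$ contain finite numbers.

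For $(1)\Rightarrow(3)$, the matching-connectedness clause is automatic from coarse invariance of large scale connectedness, and by the stability lemma it suffices to produce a single coarse equivalence $X_\e\sim Y_\delta$. Fixing a coarse equivalence $f\colon X\to Y$ with coarse inverse $g$, I would transport $X_\e$ forward: $f(X_\e)$ is $\w_f(\e)$-connected, hence lies in some $Y_{\delta_1}$ with finite $\delta_1\in ss(Y)$, $\delta_1\ge\delta$; symmetrically $g(Y_{\delta_1})\subseteq X_{\e_1}=X_{\e_1}(\theta)$ for a suitable finite $\e_1\in ss(X)$ with $\e_1\ge\e$, chosen large enough (using $gf\approx\id_X$) that $g(Y_{\delta_1})$ lies in the single $\e_1$-component of the base point $\theta$. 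Writing $r\colon X_{\e_1}\to X_\e$ for the coarse inverse of the stable inclusion $X_\e\hookrightarrow X_{\e_1}$, I would set $F_0=f|X_\e\colon X_\e\to Y_{\delta_1}$ and $G_0=r\circ g|Y_{\delta_1}\colon Y_{\delta_1}\to X_\e$ and verify $G_0F_0\approx\id_{X_\e}$ and $F_0G_0\approx\id_{Y_{\delta_1}}$ out of $gf\approx\id_X$, $fg\approx\id_Y$ and $r\approx\id$ on $X_{\e_1}$. This gives $X_\e\sim Y_{\delta_1}$, and then $Y_{\delta_1}\sim Y_\delta$ by stability.

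I expect the step $(1)\Rightarrow(3)$ to be the main obstacle, specifically when $X,Y$ are not large scale connected. There one is tempted to argue $X\sim X_\e\times2^{<\IN}\sim Y_\delta\times2^{<\IN}\sim Y$ and cancel the factor $2^{<\IN}$, but such cancellation is not justified in general. The restriction argument above avoids it by carrying the ambient coarse equivalence directly onto the components; the price is the bookkeeping with the enlarged components $Y_{\delta_1}$ and $X_{\e_1}$ forced by the oscillations of $f$ and $g$, and it is precisely this enlargement that the stability lemma is designed to absorb.
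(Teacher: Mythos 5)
Your proof is correct and follows essentially the same route as the paper's: (3)$\Rightarrow$(2) trivially (finite elements of $ss$ exist since the factorizing steps are finite), (2)$\Rightarrow$(1) via Theorem~\ref{t6.1}, and (1)$\Rightarrow$(3) by restricting the ambient coarse equivalence to components enlarged to absorb the oscillations $\w_f(\e)$, $\w_g(\delta)$ and the displacement of $g\circ f$, then retracting back along the inclusion of the smaller component --- exactly the paper's construction with $\e'=\max\{\e,D,\w_g(\delta)\}$ and $\delta'=\max\{\delta,D,\w_f(\e)\}$. The only real difference is that you justify the stability of the inclusion $X_\e\hookrightarrow X_{\e'}$ by invoking the factorization Theorem~\ref{t4.1} for the shift-homogeneous space $X_{\e'}$, whereas the paper gets it more directly from the finiteness of $\{X_\e(x):x\in X_{\e'}\}$ together with homogeneity, which gives $\sup\{d(x,X_\e):x\in X_{\e'}\}<\infty$ and hence a nearest-point retraction at bounded distance from the identity.
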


\begin{proof} The implication (3)$\Rightarrow$(2) is trivial and (2)$\Rightarrow$(1) follows from
Theorem~\ref{t6.1}.

(1)$\Rightarrow$(3) Assume that the spaces $X$ and $Y$ are coarsely equivalent.
Then $X$ is large scale connected if and only if so is its coarse copy $Y$. It remains to show that for any real numbers $\e\in ss(X)$ and $\delta\in ss(Y)$, the factorizing components $X_\e$ and $Y_\delta$ are coarsely equivalent.

The coarse equivalence of the spaces $X,Y$ yields two coarse maps $f:X\to Y$ and $g:Y\to X$ such that
$$D=\max\{d_X(\id_X,g\circ f),d_Y(\id_Y,f\circ g)\}<\infty.$$
Let $\e'=\max\{\e,D,\w_g(\delta)\}$ and $\delta'=\max\{\delta,D,\w_f(\e)\}$.

Fix any point $x_0\in X$ and let $y_0=f(x_0)\in Y$. For each $s\in[0,\infty)$ identify the factorizing components $X_s$ and $Y_s$ with the $s$-connected components $X_s(x_0)$ and $Y_s(y_0)$ of the points $x_0$, $y_0$, respectively.

Since $\e\in ss(X)$, the set $\{X_\e(x):x\in X_{\e'}\}$ is finite. This fact combined with the homogeneity of $X$ implies that $\sup \{d(x',X_\e):x\in X_{\e'}\}<\infty$. It follows that the identity embedding $i_X:X_\e\to X_{\e'}$ is a coarse equivalence and there is a coarse map $r_X:X_{\e'}\to X_\e$ such that $r_X\circ i_X$ coincides with the identity map of $X_\e$.

By analogy we can show that the identity embedding $i_Y:Y_\delta\to Y_{\delta'}$ is a coarse equivalence and find a coarse map $r_Y:Y_{\delta'}(y_0)\to Y_\delta(y_0)$ such that $r_Y\circ i_Y$ coincides with the identity map of $Y_\delta$.

It follows that $f(X_\e)\subset Y_{\w_f(\e)}\subset Y_{\delta'}$.
Also $\e'\ge\max\{D,\w_g(\delta)\}$ and $d_X(x_0,g(y_0))=d_X(x_0,g\circ f(x_0))\le D$ imply $g(Y_\delta)=g(Y_\delta(y_0))\subset X_{\w_g(\delta)}(g(y_0))\subset X_{\e'}(x_0)=X_{\e'}$.

So, the compositions $r_Y\circ f:X_\e\to Y_\delta$ and $r_X\circ g:Y_\delta\to X_\e$ are well-defined coarse maps witnessing that the spaces $X_\e$ and $Y_\delta$ are coarsely equivalent.
\end{proof}

\section{Amenable metric spaces}\label{s7}

In this section we discuss amenable metric spaces, which appear in Theorem~\ref{t1.5}.

Let us recall that a metric space $X$ is {\em amenable} if for each $c>1$ and $\e<\infty$ there is a finite subset $F\subset X$ whose $\e$-neighborhood $O_\e(F)$ is finite and has cardinality $|O_\e(F)|\le c\cdot|F|$.

It follows that each amenable homogeneous metric space $X$ is boundedly-finite and hence proper.
The choice  of the term ``amenable'' is justified by the following proposition.

\begin{proposition}\label{p7.1} Each amenable metric space $X$ admits a finitely additive probability measure $\mu:\mathcal P(X)\to[0,1]$ defined on the algebra $\mathcal P(X)$ of all subsets of $X$, which is invariant in the sense that $\mu(f(A))=\mu(A)$ for any subset $A\subset X$ and any bijection $f:X\to X$ with $d_X(f,\id)<\infty$.
\end{proposition}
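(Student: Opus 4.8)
The plan is to dualize: rather than constructing $\mu$ directly, I would build an invariant \emph{mean} on the space $\ell^\infty(X)$ of bounded real-valued functions and then recover $\mu$ by setting $\mu(A)=m(\chi_A)$ for the indicator function $\chi_A$ of $A\subset X$. Let $\mathcal B$ denote the group of all bijections $f:X\to X$ with $d_X(f,\id)=\sup_{x\in X}d_X(f(x),x)<\infty$; this is a group because $d_X(f^{-1},\id)=d_X(f,\id)$ and displacements add under composition. Each $f\in\mathcal B$ acts on $\ell^\infty(X)$ by $(f\cdot\phi)(x)=\phi(f^{-1}(x))$, and since $f\cdot\chi_A=\chi_{f(A)}$, any $\mathcal B$-invariant mean $m$ will yield the desired invariant measure. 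A mean is a positive linear functional $m$ on $\ell^\infty(X)$ with $m(\mathbf 1)=1$; finite additivity, the values in $[0,1]$, and the probability normalization of $\mu$ are then automatic, so the whole problem reduces to producing a single mean invariant under the action of every element of $\mathcal B$.

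The key estimate comes straight from the amenability (Følner) condition. Given $f\in\mathcal B$ with $d_X(f,\id)\le\e$ and a finite set $F$ with $|O_\e(F)|\le c|F|$, both $f(F)$ and $f^{-1}(F)$ lie in $O_\e(F)$, whence $|f^{-1}(F)\,\triangle\,F|\le 2(c-1)|F|$. Writing $m_F(\phi)=\frac1{|F|}\sum_{x\in F}\phi(x)$ for the averaging mean supported on $F$, a change of variables gives $m_F(f\cdot\phi)-m_F(\phi)=\frac1{|F|}\big(\sum_{x\in f^{-1}(F)}\phi(x)-\sum_{x\in F}\phi(x)\big)$, which is bounded in absolute value by $\tfrac{\|\phi\|_\infty}{|F|}\,|f^{-1}(F)\,\triangle\,F|\le 2(c-1)\|\phi\|_\infty$. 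Thus the averaging means become \emph{arbitrarily close to $f$-invariant} as $c\to1$, provided the scale $\e$ of the Følner set dominates $d_X(f,\id)$.

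To obtain a single mean invariant under \emph{all} of $\mathcal B$ at once — which is the main obstacle, since $\mathcal B$ is in general neither countable nor finitely generated — I would pass to a limit. Direct the index set $D=\{(c,\e):c>1,\ \e<\infty\}$ by declaring $(c,\e)\preccurlyeq(c',\e')$ iff $c'\le c$ and $\e\le\e'$, choose for each $(c,\e)\in D$ a Følner set $F_{c,\e}$ witnessing amenability, and let $m$ be a weak$^*$ cluster point of the net $(m_{F_{c,\e}})_{(c,\e)\in D}$ in the unit ball of $\ell^\infty(X)^*$; such a cluster point exists by the Banach--Alaoglu theorem, and it is again a mean because the set of means is weak$^*$ closed. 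Now fix $f\in\mathcal B$, put $\delta=d_X(f,\id)$, and fix $\phi\in\ell^\infty(X)$ and $\eta>0$. Choosing $(c_0,\e_0)\in D$ with $\e_0\ge\delta$ and $2(c_0-1)\|\phi\|_\infty<\eta$, the cluster-point property supplies an index $(c,\e)\succcurlyeq(c_0,\e_0)$ with $m_{F_{c,\e}}$ as weak$^*$-close to $m$ as we like on $\phi$ and $f\cdot\phi$; combining this with the estimate of the previous paragraph (valid since $\e\ge\delta$ and $c\le c_0$) yields $|m(f\cdot\phi)-m(\phi)|<3\eta$. As $\eta>0$ and $\phi$ are arbitrary, $m$ is $f$-invariant, and as $f\in\mathcal B$ was arbitrary, $m$ is $\mathcal B$-invariant. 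Setting $\mu(A)=m(\chi_A)$ then finishes the proof.
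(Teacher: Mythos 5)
Your proof is correct and is essentially the paper's argument in dual form: the paper takes F\o lner sets $F_n$ with $|O_n(F_n)|\le(1+\tfrac1n)|F_n|$ and defines $\mu(A)=\lim_{n\to\U}|A\cap F_n|/|F_n|$ along a free ultrafilter, verifying invariance via exactly your key estimate ($f^{-1}(F_n)\subset O_r(F_n)\subset O_n(F_n)$ forces the densities of $A$ and $f(A)$ to differ by at most $\tfrac1n$). Your weak$^*$ cluster point of averaging means over the two-parameter net is just another realization of the same generalized limit, so the two proofs differ only in packaging (means on $\ell^\infty(X)$ plus Banach--Alaoglu versus an ultrafilter limit of densities), not in substance.
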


\begin{proof} By the definition of amenability, for each $n\in\IN$ there is  a finite subset $F_n\subset X$ such that $|O_n(F_n)|\le (1+\frac1n)|F_n|$ and hence $|O_n(F)\setminus F_n|\le\frac1n|F_n|$. Choose a free ultrafilter $\U$ on the set of positive integers $\IN$ and define a probability finitely additive measure $\mu$ on $X$ by the formula
$$\mu(A)=\lim_{n\to\U}\frac{|A\cap F_n|}{|F_n|}$$for $A\subset X$.

It remains to show that the measure $\mu$ is invariant. Given any bijection $f:X\to X$ of $X$ with $r=d_X(f,\id)<\infty$, it suffices to prove that $\mu(f(A))\le \mu(A)$. For every $n> r$ we get $f^{-1}(F_n)\subset O_r(F_n)\subset O_n(F_n)$ and hence
$$|f(A)\cap F_n|=|A\cap f^{-1}(F_n)|\le |A\cap O_n(F_n)|\le |A\cap F_n|+|O_n(F_n)\setminus F_n|\le|A\cap F_n|+\tfrac1n|F_n|.$$
Now we see that
$$\mu(f(A))=\lim_{n\to\U}\frac{|f(A)\cap F_n|}{|F_n|}=\lim_{n\to\U}\frac{|A\cap f^{-1}(F_n)|}{|F_n|}\le
\lim_{n\to\U}\frac{|A\cap F_n|}{|F_n|}+\lim_{n\to\U}\frac1n\frac{|F_n|}{|F_n|}=\mu(A)+0=\mu(A).$$
\end{proof}

The following corollary of Proposition~\ref{p7.1} implies that for countable groups our definition of amenability is equivalent to the classical one \cite{Amen}.

\begin{proposition}\label{p7.2} A countable group $G$ endowed with a proper left-invariant metric $d$ is amenable if and only if the group $G$ admits a right-invariant finitely additive probability measure $\mu:\mathcal P(G)\to[0,1]$ defined on the algebra of all subsets of $G$.
\end{proposition}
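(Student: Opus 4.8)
The plan is to exploit the key geometric fact that, for a left-invariant proper metric $d$ on $G$, the $\e$-neighborhoods appearing in the definition of metric amenability are exactly right-multiplicative thickenings. First I would record that for each finite $\e$ the ball $B_\e=\{g\in G:d(g,e)\le\e\}$ is a finite symmetric subset containing the identity $e$ (finiteness by properness, and symmetry together with $e\in B_\e$ by left-invariance), that the family $\{B_\e\}_{\e<\infty}$ exhausts $G$ and is cofinal among finite subsets, and that for every finite $F\subset G$ one has the identity $O_\e(F)=F\cdot B_\e$. This single observation translates metric amenability into a Følner-type condition and drives both implications.

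For the forward implication I would apply Proposition~\ref{p7.1} directly. It supplies a finitely additive probability measure $\mu$ on $\mathcal P(G)$ invariant under every bijection of $G$ having finite displacement. The point is that each right translation $R_g:x\mapsto xg$ has finite displacement: by left-invariance $d(R_g(x),x)=d(xg,x)=d(g,e)$ for all $x$, so $d(R_g,\id)=d(g,e)<\infty$. Hence $\mu(Ag)=\mu(R_g(A))=\mu(A)$ for all $A\subset G$ and $g\in G$, which is precisely right-invariance.

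For the converse I would reduce metric amenability to the classical Følner condition. Given $c>1$ and $\e<\infty$, I apply the right Følner condition, which is equivalent to the existence of a right-invariant finitely additive probability measure on $\mathcal P(G)$ by the classical theory \cite{Amen}, to the finite set $K=B_\e$ with tolerance $c-1$. This produces a finite nonempty $F$ with $|FB_\e\setminus F|<(c-1)|F|$; since $e\in B_\e$ gives $F\subset FB_\e=O_\e(F)$, this yields $|O_\e(F)|=|F|+|FB_\e\setminus F|<c|F|$, so $G$ is amenable as a metric space.

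The main obstacle is bookkeeping rather than conceptual: one must match the right-invariance of $\mu$ with the correct ($FK$ rather than $KF$) side of the Følner condition, and verify that the cofinal family $\{B_\e\}$ of finite symmetric sets recovers the full Følner condition, which is obtained by writing $FK\setminus F$ as a union over the finitely many right translates $Fg\setminus F$, $g\in K$. Since for groups left- and right-amenability coincide via the inversion $g\mapsto g^{-1}$, no genuine difficulty arises at this point.
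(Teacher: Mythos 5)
Your proof is correct and follows the paper's own route: the forward direction is identical (apply Proposition~\ref{p7.1} and observe via left-invariance that each right shift $r_g$ has displacement $d(r_g,\id)=d(g,1_G)<\infty$), while for the converse the paper simply cites the classical F\o lner characterization \cite[4.10]{Amen}, whose details you correctly fill in using the identity $O_\e(F)=F\cdot B_\e$ and the estimate $|FB_\e\setminus F|\le\sum_{g\in B_\e}|Fg\setminus F|$. The only cosmetic point is that finiteness of the balls $B_\e$ comes from properness \emph{together with} countability and homogeneity (via the Baire argument recalled in the paper's introduction), not from properness alone.
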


\begin{proof} If the metric space $(G,d)$ is amenable, then by Proposition~\ref{p7.1} there is a finitely-additive probability measure $\mu:\mathcal P(G)\to[0,1]$ which is invariant in the sense that $\mu(f(A))=\mu(A)$ for any subset $A\subset X$ and any bijective function $f:G\to G$ with $d(f,\id)<\infty$. We claim that the measure $\mu$ is right-invariant in the sense that $\mu(Ag)=\mu(A)$ for each subset $A\subset X$ and an element $g\in G$. The left-invariance of the metric $d$ implies that the right shift $r_g:G\to G$, $r_g:x\mapsto xg$, is a bijection with
$$d(r_g,\id)=\sup_{x\in G}d(r_g(x),x)=\sup_{x\in G}d(xg,x)=d(g,1_G)<\infty.$$ Then $\mu(Ag)=\mu(r_g(A))=\mu(A)$ by the invariantness of the measure $\mu$.
This proves the ``only if'' part of the proposition.

The ``if'' part follows from the classical F\o lner characterization of amenable groups \cite[4.10]{Amen}.
\end{proof}

The amenability of homogeneous metric spaces is inherited by their large scale connected components.

\begin{proposition}\label{p7.3} If a homogeneous metric space $X$ is amenable, then each $\e$-connected component of $X$ is amenable.
\end{proposition}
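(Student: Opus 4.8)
The plan is to fix an $\e$-connected component $C=X_\e(x_0)$ and verify the defining F\o lner-type condition directly: given $c>1$ and a finite $\delta$, I want a finite set $F\subset C$ whose $\delta$-neighborhood inside $C$ has cardinality at most $c\,|F|$. The first observation I would record is that the neighborhood of a subset $A\subset C$ computed inside the subspace $C$ is exactly $O_\delta(A)\cap C$, where $O_\delta(A)$ is the neighborhood taken in $X$; in particular it is contained in the global neighborhood $O_\delta(A)$. Since an amenable homogeneous space is boundedly-finite (as already noted in this section), every such neighborhood of a finite set is automatically finite, so the only real content is the cardinality estimate.

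Next I would feed the amenability of $X$ with the same parameters $c$ and $\delta$ to obtain a nonempty finite set $F'\subset X$ with $|O_\delta(F')|\le c\,|F'|$. The set $F'$ meets only finitely many $\e$-connected components, say $C_1,\dots,C_k$, and I would split it as $F'=\bigsqcup_{i\le k}(F'\cap C_i)$. Here homogeneity enters: for each $i$ choose a point $p_i\in C_i$ and an isometry $g_i:X\to X$ with $g_i(p_i)=x_0$; since isometries permute $\e$-connected components, $g_i$ restricts to an isometry of $C_i$ onto $C$. Putting $F_i=g_i(F'\cap C_i)\subset C$, I get finite subsets of $C$ with $|F_i|=|F'\cap C_i|$ and $|O_\delta(F_i)\cap C|=|O_\delta(F'\cap C_i)\cap C_i|$, because $g_i$ carries the neighborhood taken in $C_i$ isometrically onto the neighborhood taken in $C$.

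The key inequality I would then assemble is
$$\sum_{i\le k}\big|O_\delta(F_i)\cap C\big|=\sum_{i\le k}\big|O_\delta(F'\cap C_i)\cap C_i\big|\le\big|O_\delta(F')\big|\le c\sum_{i\le k}|F_i|,$$
where the middle step uses that the sets $O_\delta(F'\cap C_i)\cap C_i$ lie in pairwise distinct components and are all contained in $O_\delta(F')$. A pigeonhole/mediant argument finishes: from $\sum_i a_i\le c\sum_i b_i$ with $b_i\ge 0$ not all zero, one of the ratios $a_i/b_i$ (for $b_i>0$) is at most $c$, so some $F=F_{i_0}$ satisfies $|O_\delta(F)\cap C|\le c\,|F|$, as required.

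The step I expect to need the most care is the middle inequality, i.e. making sure the per-component neighborhoods do not overcount: I must check that each $O_\delta(F'\cap C_i)\cap C_i$ genuinely sits inside the single global neighborhood $O_\delta(F')$, and that, since distinct components are disjoint, summing their cardinalities stays below $|O_\delta(F')|$ even when $\delta$ exceeds $\e$ (so that $O_\delta(F')$ may spill into further components not meeting $F'$; this only helps, yielding the inequality rather than equality). Everything else---finiteness of the neighborhoods, nonemptiness of some $F_i$, and the transport of cardinalities by the isometries $g_i$---is routine once homogeneity and bounded-finiteness are in hand.
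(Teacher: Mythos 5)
Your proof is correct and follows essentially the same route as the paper's: split the global F\o lner set of $X$ along its $\e$-connected components, bound the sum of the per-component neighborhood sizes by $|O_\delta(F')|$, and pigeonhole to extract one good component. The only cosmetic difference is that you transport the pieces into a fixed component $C$ via homogeneity \emph{before} pigeonholing, whereas the paper pigeonholes first among the components and invokes homogeneity at the end to transfer the witness set to every other component.
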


\begin{proof} To show that each $\e$-connected component of $X$ is amenable, fix any $c>1$ and $\delta<\infty$. By the amenability of the space $X$ there is a finite subset $F\subset X$ such that $|O_\delta(F)|<c\cdot|F|$. Choose a subset $E\subset F$ such that $F\subset \bigcup_{x\in E}X_\e(x)$ and $X_\e(x)\cap X_\e(y)=\emptyset$ for any distinct points $x,y\in E$. For each $x\in E$ let $F_x=F\cap X_\e(x)$. It follows from $\bigcup_{x\in E}O_\delta(F_x)\cap X_\e(x)\subset O_\delta(F)$ that
$$\sum_{x\in E}|O_\delta(F_x)\cap X_\e(x)|\le |O_\delta(F)|<c\cdot |F|=c\cdot \Big(\sum_{x\in E}|F_x|\Big)$$and hence
$|O_\delta(F_x)\cap X_\e(x)|<c\cdot |F_x|$ for some $x\in E$. Now we see that the $\e$-component $X_\e(x)$ of the point $x$ contains a finite subset $F_x$ witnessing that $X_\e(x)$ is amenable. Since $X$ is homogeneous all $\e$-connected components of $X$ are isometric to $X_\e(x)$ and also contain such a finite set.
\end{proof}

The following two lemmas will be used in the proof of Theorem~\ref{t1.5}.

\begin{lemma}\label{l7.4} Let $X$ be an amenable metric space. Two finite subsets $A,B$ of a metric space $Z$ have the same cardinality if there is a bijection $f:X\times Z\to X\times Z$ such that $f(X\times A)=X\times B$ and  $d_{X\times Z}(f|X\times A,\id)<\infty$.
\end{lemma}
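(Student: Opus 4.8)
The plan is to run a Følner-set counting argument, using the amenability of $X$ to compare the number of disjoint ``copies'' of $X$ sitting inside $X\times A$ and inside $X\times B$. Set $r=d_{X\times Z}(f|X\times A,\id)<\infty$. The first thing I would record is that $f^{-1}$ is equally well-behaved on the image: if $(x,a)\in X\times A$ and $f(x,a)=(x',b)$, then $d_{X\times Z}\big((x',b),(x,a)\big)\le r$, so the same bound $d_{X\times Z}(f^{-1}|X\times B,\id)\le r$ holds on $X\times B=f(X\times A)$. This symmetry is what will ultimately let me deduce equality rather than just one inequality.

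The key geometric observation is that $f$ moves first coordinates by at most $r$. Indeed, since the metric on $X\times Z$ is the maximum of the two coordinate metrics, $f(x,a)=(x',b)$ with $d_{X\times Z}(f(x,a),(x,a))\le r$ forces $d_X(x,x')\le r$, i.e.\ $x'\in O_r(x)$. Consequently, for every finite subset $F\subset X$ the image $f(F\times A)$ lies inside $O_r(F)\times B$, since first coordinates land in $O_r(F)$ and second coordinates land in $B$ because $f(X\times A)=X\times B$. As $f$ is injective, counting gives
$$|F|\cdot|A|=|F\times A|=|f(F\times A)|\le|O_r(F)\times B|=|O_r(F)|\cdot|B|.$$
Now I would invoke the amenability of $X$: for every $c>1$ there is a finite nonempty $F\subset X$ with $|O_r(F)|\le c\,|F|$. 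Substituting this into the displayed inequality and dividing by $|F|$ yields $|A|\le c\,|B|$ for every $c>1$, hence $|A|\le|B|$. Applying the identical argument to the bijection $f^{-1}$, which satisfies $f^{-1}(X\times B)=X\times A$ together with the displacement bound recorded above, gives $|B|\le|A|$, and together these force $|A|=|B|$.

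The argument is essentially routine once the reduction to first coordinates is in place, so there is no serious obstacle; the only points that need a little care are the two bookkeeping steps. The first is the passage from the displacement bound on $X\times A$ to the one on $X\times B$, which is precisely what makes the two inequalities symmetric and hence yields equality of cardinalities. The second is that the finite sets furnished by amenability may be taken nonempty (exactly as is implicitly used in the proof of Proposition~\ref{p7.1}), so that dividing the counting inequality by $|F|$ is legitimate; the degenerate case $X=\emptyset$ is trivial and may be set aside.
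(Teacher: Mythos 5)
Your proof is correct and follows essentially the same F\o lner-counting argument as the paper: both rest on the inequality $|F|\cdot|A|\le|O_\rho(F)|\cdot|B|$ for a suitable neighborhood of a finite set $F$ furnished by amenability. The only differences are cosmetic improvements --- you exploit the max-metric to get $f(F\times A)\subset O_r(F)\times B$ directly (the paper detours through $O_{2r}(F\times B)$ with $r$ enlarged by $\diam(A\cup B)$), you argue directly rather than by contradiction, and you make explicit the displacement bound on $f^{-1}|X\times B$ where the paper merely says ``by analogy.''
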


\begin{proof} Assume conversely that $|A|>|B|$. Let $r=\max\{\diam(A\cup B),d_{X\times Z}(f|X\times A,\id)\}$. The amenability of $X$ yields a finite set $F\subset X$ such that
$|O_{2r}(F)|<\frac{|A|}{|B|}\cdot|F|$. It follows that $$f(F\times A)\subset O_r(F\times A)\subset O_r(O_r(F\times B))=O_{2r}(F\times B).$$ Since $f(F\times A)\subset f(X\times A)=X\times B$, we get
$f(F\times A)\subset O_{2r}(F\times B)\cap (X\times B)=O_{2r}(F)\times B$ and hence $|F|\cdot|A|=|F\times A|=|f(F\times A)|\le |O_{2r}(F)|\cdot |B|$.
Then $|O_{2r}(F)|\ge \frac{|A|}{|B|}\cdot|F|$, which contradicts the choice of $F$.

By analogy we can treat the case $|A|<|B|$.
\end{proof}

\begin{lemma}\label{l7.5} Let $Z$ be a boundedly-finite ultrametric space and $C$ be an $s$-connected amenable metric space for some positive real number $s$. If  $f:C\times Z\to X$ is a coarse isomorphism from the product $C\times Z$ onto a shift-homogeneous metric space $X$, then $\w_f(s)\in ss(X)$ and
for each $\e\ge \w_f(s)$ there are a number $n\in\IN$ and an $n$-to-1 function $f_\e:Z\to X/\e$ making the following diagram commutative:
$$
\begin{CD}
C\times Z@>f>> X\\
@V{pr}VV @VV{q_\e}V\\
Z@>>{f_\e}> X/\e
\end{CD}
$$
\end{lemma}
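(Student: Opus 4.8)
The plan is to read off the coarse-geometric structure of the product $C\times Z$ from the $s$-connectedness of $C$ and the ultrametric structure of $Z$, transport it across $f$, and then use amenability of $C$ to control the fibers. First I would prove the structural identity that for every $\e\ge s$ and every $(c,z)\in C\times Z$,
\[(C\times Z)_\e\big((c,z)\big)=C\times O_\e(z),\]
where $O_\e(z)=Z_\e(z)$ is the closed $\e$-ball in the ultrametric space $Z$. For ``$\supseteq$'' it suffices to note that $C\times O_\e(z)$ is $\e$-connected: any two of its points $(c_1,z_1),(c_2,z_2)$ are joined by first running an $s$-chain (hence an $\e$-chain) inside the $s$-connected slice $C\times\{z_1\}$ from $(c_1,z_1)$ to $(c_2,z_1)$, and then taking one step to $(c_2,z_2)$ of length $d_Z(z_1,z_2)\le\max\{d_Z(z_1,z),d_Z(z,z_2)\}\le\e$. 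For ``$\subseteq$'' I project an $\e$-chain to $Z$. Next, since every slice $C\times\{z\}$ is $s$-connected and $f$ is coarse, all images $f(c,z)$ with $c\in C$ lie in one $\w_f(s)$-connected component of $X$; hence for each $\e\ge\w_f(s)$ the component $X_\e(f(c,z))$ is independent of $c$, which lets me define $f_\e(z)=q_\e(f(c,z))$, makes the square commute, and yields the identity $f^{-1}(X_\e(x))=C\times f_\e^{-1}(X_\e(x))$.

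The finiteness statements then follow from the structural identity. Fix $\e\ge\w_f(s)$, a finite $\delta\ge\e$, a point $x\in X$, and write $f^{-1}(x)=(c_1,z_1)$. Since $f^{-1}$ is coarse, $f^{-1}(X_\delta(x))$ lies in a single $\delta'$-connected component of $C\times Z$ with $\delta'=\max\{\w_{f^{-1}}(\delta),s\}$, and by the structural identity this component equals $C\times O_{\delta'}(z_1)$; as $Z$ is boundedly-finite, $O_{\delta'}(z_1)$ is finite. Therefore every $y\in X_\delta(x)$ has $X_\e(y)=f_\e(z)$ for some $z\in O_{\delta'}(z_1)$, so $\{X_\e(y):y\in X_\delta(x)\}\subseteq f_\e(O_{\delta'}(z_1))$ is finite. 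Taking $\delta=\e$ this proves $\e\in ss(X)$, in particular $\w_f(s)\in ss(X)$ and every $\e\ge\w_f(s)$ lies in $ss(X)$, so that $X/\e$ and $q_\e$ are legitimately defined; the same computation applied to the $\e$-connected set $X_\e(x)$ shows that each fiber $f_\e^{-1}(X_\e(x))$ is finite.

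The main obstacle is to show that all nonempty fibers of $f_\e$ have one common cardinality $n$, and here shift-homogeneity of $X$ and amenability of $C$ enter through Lemma~\ref{l7.4}. Given $x,x'\in X$ with fibers $S=f_\e^{-1}(X_\e(x))$ and $S'=f_\e^{-1}(X_\e(x'))$, I choose a shift-isometry $h:X\to X$ with $h(x)=x'$ and set $g=f^{-1}\circ h\circ f$, a bijection of $C\times Z$. As $h$ is an isometry, $h(X_\e(x))=X_\e(x')$, so $g(C\times S)=f^{-1}(X_\e(x'))=C\times S'$. Since $X_\e(x)$ is large scale connected, $M=d_X(h|X_\e(x),\id)<\infty$ by the defining property of a shift-isometry, and the coarseness of $f^{-1}$ gives $d_{C\times Z}(g|C\times S,\id)\le\w_{f^{-1}}(M)<\infty$. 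As $C$ is amenable and $S,S'$ are finite, Lemma~\ref{l7.4} yields $|S|=|S'|$. Finally, $f$ is a bijection, so $f_\e$ is onto $X/\e$ and every fiber is nonempty; hence all fibers share the finite cardinality $n\in\IN$ and $f_\e$ is $n$-to-$1$, completing the proof.
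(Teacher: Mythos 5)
Your proof is correct and takes essentially the same approach as the paper's: you establish $\w_f(s)\in ss(X)$ by showing the relevant preimages are large scale connected, hence bounded and finite, subsets of the ultrametric space $Z$, and you obtain the common fiber cardinality $n$ by conjugating a shift-isometry through $f$ and applying Lemma~\ref{l7.4}, exactly as in the paper. Your only deviation is cosmetic: you make explicit the identity $(C\times Z)_\e\big((c,z)\big)=C\times O_\e(z)$ and define $f_\e$ directly on slices, steps the paper compresses into the phrase ``this fact can be used to show that $f^{-1}(X_\delta(x))=C\times Z_x$.''
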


\begin{proof} To see that $\w_f(s)\in ss(X)$, it suffices to check that for any finite $\delta\ge \w_f(s)$ the set $\{X_{\w_f(s)}(y):y\in X_\delta(x)\}$ is finite for each $x\in X$.

Observe that for each $z\in Z$ the set $f(C\times\{z\})$ is $\w_f(s)$-connected and hence lies in some $\delta$-connected component of $X$. This fact can be used to show that $f^{-1}(X_\delta(x))=C\times Z_x$ for some set $Z_x\subset Z$. The $\delta$-connectedness of $X_\delta(x)$ implies the $\w_{f^{-1}}(\delta)$-connectedness of the set $f^{-1}(X_\delta(x))$ and its projection $Z_x$ onto $Z$. Since $Z$ is a boundedly-finite ultrametric space, the large scale connected set $Z_x\subset Z$ is bounded and hence finite. Since for each $y\in X_\delta(x)$ the preimage $f^{-1}(X_{\w_f(s)}(y))$ is of the form $C\times Z_y$ for some set $Z_y\subset Z_x$, we conclude that the set $\{X_{\w_f(s)}(y):y\in X_\delta(x)\}$ has finite cardinality $\le|Z_x|$, witnessing that $\w_f(s)\in ss(X)$.
\smallskip

Now fix any $\e\ge \w_f(s)$. We already know that for each $x\in X$ the preimage $f^{-1}(X_\e(x))$ is equal to the product $C\times Z_x$ for some finite set $Z_x\subset Z$. We claim that $|Z_x|=|Z_y|$ for all points $x,y\in X$. By the shift-homogeneity of $X$, there is a shift-isometry $h:X\to X$ such that $h(x)=y$. Then $h(X_\e(x))=X_\e(y)$ and $g=f^{-1}\circ h\circ f$ is a coarse isomorphism of $C\times Z$ with $g(C\times Z_x)=C\times Z_y$. We claim that $\dist(g|C\times Z_x,\id)<\infty$. Since $h$ is a shift-isometry, the number $r=\sup\{d_X(h(y),y):y\in X_\e(x)\}$ is finite.
Then for each $z\in Z_x$ and $c\in C$ we get
$$d_{C\times Z}(g(c,z),(c,z))=d_{C\times Z}(f^{-1}\circ h\circ f(c,z),f^{-1}\circ f(c,z))\le \w_{f^{-1}}(d_X(h\circ f(c,z),f(c,z))\le\w_{f^{-1}}(r).$$ Now it is legal to apply Lemma~\ref{l7.4} and conclude that $|Z_x|=|Z_y|=n$ for some number $n\in\IN$.

It follows that the map $f_\e:Z\to X/\e$ defined by $f_\e^{-1}(X_\e(x))=Z_x$ for $x\in X$ is $n$-to-1.
Taking into account that $f^{-1}(X_\e(x))=C\times Z_x=C\times f_\e^{-1}(X_\e(x))$ for any $x\in X$, we see that the diagram
 $$
\begin{CD}
C\times Z@>f>> X\\
@V{pr}VV @VV{q_\e}V\\
Z@>>{f_\e}> X/\e
\end{CD}
$$
commutes.
\end{proof}

\section{Proof of Theorem~\ref{t1.5}}\label{s8}

Let $X,Y$ be amenable shift-homogeneous metric spaces. Being amenable, the spaces $X,Y$ are boundedly-finite. Consequently, $s(X)\in ss(X)$ and $s(Y)\in ss(Y)$ by Proposition~\ref{p3.2} and hence the factorization functions $\fact_X=\fact_{X/s(X)}:\Pi\to\w\cup\{\infty\}$ and
$\fact_Y=\fact_{Y/s(Y)}:\Pi\to\w\cup\{\infty\}$ are well-defined.

To prove the ``only if'' part of Theorem~\ref{t1.5}, assume that there is a coarse isomorphism $i:X\to Y$. If $s(X)=\infty$, then $s(Y)=\infty$ by Proposition~\ref{p3.3}. In this case $X=X_{s(X)}$, $Y=Y_{s(Y)}$ and we can put $n=m=1$.

So, we assume that the proper steps $s(X)$ and $s(Y)$ are finite. By Theorem~\ref{t4.1}, there are coarse isomorphisms
$f:X_{s(X)}\times X/{s(X)}\to X$ and $g:Y_{s(Y)}\times Y/s(Y)\to Y$. Consider the coarse isomorphism $h=i\circ f:X_{s(X)}\times X/{s(X)}\to Y$ and put $\e=\max\{\w_h(s(X)),\w_g(s(Y))\}$.

By Lemma~\ref{l7.5}, there are functions $h_{\e}:X/{s(X)}\to Y/\e$ and $g_\e:Y/s(Y)\to Y/\e$ making the following diagram commutative:
$$\xymatrix{
 X_{s(X)}\times X/s(X)\ar[d]_{\pr}\ar[r]^-f&X\ar[ld]^{q_{s(X)}}\ar[r]^{i}&Y\ar[d]_{q_\e}&Y_{s(Y)}\times Y/s(Y)\ar[l]_-g\ar[d]^{\pr}\\
X/s(X)\ar[rr]_{h_\e}&&Y/\e&Y/s(Y)\ar[l]^{g_\e}.
}
$$
Moreover, the function $h_\e$ is $n$-to-1 and $g_\e$ is $m$-to 1 for some numbers $n,m\in\IN$.

\begin{claim}\label{cl:rev} $\fact_{X/s(X)}=\fact_n+\fact_{Y/\e}$.
\end{claim}

\begin{proof} The inequality $\fact_{X/s(X)}\le \fact_n+\fact_{Y/\e}$ will follow as soon as we check that $k\le \fact_n(p)+\fact_{Y/\e}(p)$ for any prime number $p$ and any non-negative integer number $k\le \fact_{X/s(X)}(p)$. By the homogeneity of $X$ and the definition of $\fact_{X/s(X)}(p)$, there is $\delta>s(X)$ such that $p^k$ divides $|\{X_{s(X)}(x'):x'\in X_\delta(x)\}|$ for every $x\in X$.

Recall that $i:X\to Y$ is a coarse isomorphism and consider the number $\delta'=\w_i(\delta)$. It follows that
$i^{-1}(Y_{\delta'})=\bigcup\{X_{\delta}(x):x\in i^{-1}(Y_{\delta'})\}$ and consequently
the cardinality of the set $\{X_{s(X)}(x):x\in i^{-1}(Y_{\delta'})\}$ is divided  by $p^k$.

Since $\w_i(s(X))\le \e$, for every $y\in Y$, the preimage $i^{-1}(Y_\e(y))$ is a union of $s(X)$-connected components of $X$ and the number of these components is equal to $n$ (as $h_\e$ is an $n$-to-1 map). Consequently, the cardinality of $|Y_{\delta'}/\e|$ is divided by $p^k/p^{\fact_n(p)}$ and hence $\fact_{Y/\e}(p)\ge k-\fact_n(p)$ and $k\le \fact_n(p)+\fact_{Y/\e}(p)$.
\smallskip

Next, we prove that $\fact_{X/s(X)}\ge \fact_n+\fact_{Y/\e}$. Given any prime number $p$ and any
non-negative integer number $k\le \fact_{Y/\e}(p)$, we need to find $\delta$ such that the cardinality of the set $\{X_{s(X)}(x):x\in X_\delta\}$ is divided by $p^{k+\fact_n(p)}$. Since $k\le \fact_{Y/\e}(p)$, there is $\delta'<\infty$ such that $p^k$ divides the cardinality of the set $|\{Y_\e(y):y\in Y_{\delta'}\}|$. Put $\delta=\w_{i^{-1}}(\delta')$ and observe that $i(X_\delta)$ is a finite union of $\delta'$-components, which implies that $|\{Y_\e(y):y\in i(X_\delta)\}|$ is divided by $p^k$. Since the map $h_\e:X/s(X)\to Y/\e$ is $n$-to-1, the cardinality of the set $\{X_{s(X)}(x):x\in X_\delta\}$ is divided by $p^k\cdot n$ and hence $k+\fact_n(p)\le \fact_{X/s(X)}(p)$.
\end{proof}

Claim~\ref{cl:rev} guarantees that
$\fact_{X/s(X)}=\fact_n+\fact_{Y/\e}$ and hence $\fact_n\le \fact_{X/s(X)}=\fact_X$.

By analogy the $m$-to-1 property of the function $g_\e:Y/s(Y)\to Y/\e$ can be used to prove that $\fact_{Y/s(Y)}=\fact_m+\fact_{Y/\e}$ and $\fact_m\le \fact_{Y/s(Y)}=\fact_Y$. Adding the function $\fact_m$ to the equation $\fact_{X/s(X)}=\fact_n+\fact_{Y/\e}$ and $\fact_n$ to the equation $\fact_{Y/s(Y)}=\fact_m+\fact_{Y/\e}$, we get
$$\fact_Y+\fact_n=\fact_{Y/s(Y)}+\fact_n=\fact_n+\fact_m+\fact_{Y/\e}=\fact_{X/s(X)}+\fact_m=\fact_X+\fact_m,$$
which implies $\fact_X-\fact_n=\fact_Y-\fact_m$.
\smallskip

To prove the ``if'' part, assume that there are numbers $n,m\in\IN$ satisfying the conditions (1)--(2) of the theorem. By Theorem~\ref{t4.1}, the spaces $X$ and $Y$ are coarsely isomorphic to the products $X_{s(X)}\times X/s(X)$ and $Y_{s(Y)}\times Y/s(Y)$, respectively.

By the condition (2) of Theorem~\ref{t1.5}, the function $\fact_Z=\fact_{X/s(X)}-\fact_n=\fact_{Y/s(Y)}-\fact_m$ is well-defined. Consider the locally finite abelian group $Z=\IZ_{\fact_Z}$ endowed with a proper left-invariant ultrametric. By Theorem~\ref{t2.1}, the product $Z\times n$ is coarsely isomorphic to $X/s(X)$ and the product $Z\times m$ is coarsely isomorphic to $Y/s(Y)$. Then we have the following chain of coarse isomorphisms
 $$X\cong X_{s(X)}\times X/s(X)\cong X_{s(X)}\times n\times Z\cong Y_{s(Y)}\times m\times Z\cong X_{s(Y)}\times Y/s(Y)\cong Y.$$

\section{Shift-homogeneous groups with finite factorizing step}\label{s9}

In this section we shall detect groups which are shift-homogeneous or have finite factorizing step.

Observe that each countable group $G$ (endowed with a proper left-invariant metric) is a boundedly-finite homogeneous metric space.  The $\e$-connected component $G_\e$ of the neutral element  $1_G\in G$ coincides with the subgroup generated by the closed $\e$-ball $O_\e(1_G)$ while the space $G/\e$ of $\e$-connected components coincides with the space $G/G_\e=\{xG_\e:x\in G\}$ of left cosets of the group $G$ by the subgroup $G_\e$. The space $G/G_\e$ carries the $G$-invariant ultrametric $d_{G/G_\e}(xG_\e,yG_\e)=\inf\{\delta\ge 0:xG_\delta=yG_\delta\}$.

Now the corresponding definitions imply the following characterization of countable groups with finite factorizing step.

\begin{proposition}\label{p9.1} A countable group $G$ endowed with a proper left-invariant metric has finite factorization step if and only if it contains a finitely generated subgroup of locally finite index in $G$.
\end{proposition}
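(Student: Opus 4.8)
The plan is to translate the metric condition $s(G)<\infty$ into a purely group-theoretic statement about the subgroups $G_\e$, and then match that statement with the definition of locally finite index. The whole argument rests on the dictionary recorded just before the proposition, so I expect no deep idea to be required, only careful bookkeeping.

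First I would set up the translation. For $G$ with a proper left-invariant metric, the $\e$-connected component of $1_G$ is the subgroup $G_\e=\langle O_\e(1_G)\rangle$ generated by the closed $\e$-ball; since $G$ is proper and homogeneous it is boundedly-finite, so $O_\e(1_G)$ is finite and every $G_\e$ is a \emph{finitely generated} subgroup. Using left-invariance, for $\delta\ge\e$ the component $G_\delta(x)$ is the coset $xG_\delta$, and the map $y\mapsto x^{-1}y$ shows that $\{G_\e(y):y\in G_\delta(x)\}$ has the same cardinality for every $x$; for $x=1_G$ this set is exactly the collection of left cosets of $G_\e$ in $G_\delta$. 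Hence $\e\in ss(G)$ if and only if $[G_\delta:G_\e]<\infty$ for all finite $\delta\ge\e$, and therefore $s(G)<\infty$ if and only if some $G_\e$ satisfies $[G_\delta:G_\e]<\infty$ for all finite $\delta\ge\e$.

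Next I would identify this last condition with $G_\e$ having locally finite index in $G$. The bridge is the identity $\langle G_\e\cup O_\delta(1_G)\rangle=G_\delta$ (valid for $\delta\ge\e$, since $O_\e(1_G)\subseteq O_\delta(1_G)$ forces $G_\e\subseteq G_\delta=\langle O_\delta(1_G)\rangle$): taking $F=O_\delta(1_G)$ shows that locally finite index of $G_\e$ forces $[G_\delta:G_\e]<\infty$, while conversely an arbitrary finite set $F$ lies in some ball $O_\delta(1_G)$, so $\langle G_\e\cup F\rangle\subseteq G_\delta$ and $[\langle G_\e\cup F\rangle:G_\e]\le[G_\delta:G_\e]<\infty$. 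This yields that $s(G)<\infty$ iff some finitely generated $G_\e$ has locally finite index, which already gives the ``only if'' implication of the proposition.

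For the ``if'' implication I would start from an arbitrary finitely generated subgroup $H=\langle S\rangle$ of locally finite index, choose $\e$ with $S\subseteq O_\e(1_G)$ so that $H\subseteq G_\e$, and then transfer locally finite index from $H$ up to $G_\e$. For finite $F$ one has $\langle G_\e\cup F\rangle=\langle O_\e(1_G)\cup F\rangle=\langle H\cup(O_\e(1_G)\cup F)\rangle$, whose index over $H$ is finite by hypothesis; since $H\le G_\e\le\langle G_\e\cup F\rangle$, multiplicativity of index in this tower makes $[\langle G_\e\cup F\rangle:G_\e]$ a divisor of that finite number, so $G_\e$ has locally finite index and $s(G)<\infty$. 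The main obstacle I anticipate is keeping the three-way relationship among $H$, $G_\e$, and $\langle G_\e\cup F\rangle$ clean — in particular verifying $\langle G_\e\cup O_\delta(1_G)\rangle=G_\delta$ and the index multiplicativity in the towers — rather than any conceptual difficulty.
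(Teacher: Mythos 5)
Your proof is correct and takes exactly the route the paper intends: the paper states Proposition~\ref{p9.1} as an immediate consequence of the dictionary it sets up just beforehand (namely $G_\e=\langle O_\e(1_G)\rangle$ and $G/\e=G/G_\e$), and your argument is a careful, accurate expansion of that same translation, including the correct upward transfer of locally finite index from $H=\langle S\rangle$ to $G_\e$ via $\langle G_\e\cup F\rangle=\langle H\cup O_\e(1_G)\cup F\rangle$ and index monotonicity. No gaps; the bookkeeping (bounded-finiteness of $(G,d)$ making each $G_\e$ finitely generated, and $\e\in ss(G)$ iff $[G_\delta:G_\e]<\infty$ for all finite $\delta\ge\e$) is all in order.
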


Subgroups of locally finite index can be used to calculate the free rank of a countable locally finite-by-abelian group. The following lemma holds for a more general class of locally abelian-by-finite groups. We recall that a group $G$ is {\em abelian-by-finite} if it contains a normal abelian subgroup of finite index. A group $G$ is {\em locally abelian-by-finite} if each finitely generated subgroup of $G$ is abelian-by-finite. By \cite[9.6]{BHZ}, each finitely generated finite-by-abelian group is abelian-by-finite, which implies that each locally finite-by-abelian group is locally abelian-by-finite.

\begin{proposition}\label{p9.2}  If a locally abelian-by-finite group $G$ has finite free rank $r_0(G)$, then each subgroup $A\subset G$ of free rank $r_0(A)=r_0(G)$ has locally finite index in $G$.
\end{proposition}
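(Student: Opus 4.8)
The plan is to unwind the definition and push everything down to the finitely generated abelian-by-finite case, where the free rank controls the index directly. By definition $A$ has locally finite index in $G$ precisely when $[\langle A\cup F\rangle:A]<\infty$ for every finite $F\subset G$, so I fix such an $F$ and set $B=\langle A\cup F\rangle$. Since $A\subset B\subset G$, monotonicity of the free rank gives $r_0(A)\le r_0(B)\le r_0(G)$, and the hypothesis $r_0(A)=r_0(G)=r$ forces $r_0(B)=r$. Thus the task is to show that a subgroup $A$ of full free rank $r$ has finite index in the over-group $B$ obtained by adjoining finitely many elements to it.

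The engine of the argument is a purely algebraic fact that I would isolate as a lemma: if $B$ is a finitely generated abelian-by-finite group with $r_0(B)=r$ and $A\le B$ is \emph{any} subgroup with $r_0(A)=r$, then $[B:A]<\infty$. To prove it, choose a normal abelian subgroup $K\trianglelefteq B$ of finite index; as $B$ is finitely generated, so is $K$, whence $K\cong\IZ^r\times T$ with $T$ finite and torsion-free rank exactly $r_0(K)=r_0(B)=r$. Now $A\cap K$ has finite index in $A$, so $r_0(A\cap K)=r$, and intersecting further with the free part discards only the finite factor $T$, so $M=(A\cap K)\cap(\IZ^r\times\{1\})$ still has $r_0(M)=r$. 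A rank-$r$ subgroup of $\IZ^r$ automatically has finite index, hence $[\IZ^r:M]<\infty$, so $[K:M]=[\IZ^r:M]\cdot|T|<\infty$ and finally $[B:A]\le[B:K]\cdot[K:M]<\infty$, using $M\le A$.

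When $A$ is finitely generated this finishes the proof at once: then $B=\langle A\cup F\rangle$ is finitely generated, hence abelian-by-finite as a subgroup of the locally abelian-by-finite group $G$, it has free rank $r$, and the lemma applies to $A\le B$. For the general case I would write $A=\bigcup_\alpha A_\alpha$ as a directed union of finitely generated subgroups, each enlarged so as to contain a fixed copy $L\cong\IZ^r$ of full rank inside $A$, set $B_\alpha=\langle A_\alpha\cup F\rangle$, and note $B=\bigcup_\alpha B_\alpha$. Each $B_\alpha$ is finitely generated abelian-by-finite of free rank $r$, so the lemma gives $[B_\alpha:A\cap B_\alpha]<\infty$; moreover for $\alpha\le\beta$ one has $A\cap B_\alpha=(A\cap B_\beta)\cap B_\alpha$, which makes the natural maps $B_\alpha/(A\cap B_\alpha)\hookrightarrow B_\beta/(A\cap B_\beta)$ injective and identifies $[B:A]$ with $\sup_\alpha[B_\alpha:A\cap B_\alpha]$.

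The step I expect to be the main obstacle is precisely this last supremum: a priori the finitely generated indices $[B_\alpha:A\cap B_\alpha]$ could grow without bound, and ruling this out is the only point where the finiteness of $r_0(G)$ must enter in an essential, quantitative way, beyond the finitely generated lemma. Concretely, one must prevent the phenomenon whereby adjoining even a single element of $G$ to a full-rank subgroup produces infinitely many cosets, and I would attack this by tracking how the fixed full-rank lattice $L\cong\IZ^r\le A$ sits inside each $B_\alpha$ and attempting to bound the relevant index data in terms of $L$ alone. This uniform bound is the crux of the proposition, and it is exactly here that the abelian-by-finite structure of the finitely generated pieces, rather than of $G$ itself, has to be exploited.
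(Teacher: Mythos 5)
Your finitely generated lemma and its proof are correct, and in substance they are exactly the paper's computation: the paper fixes a finitely generated $H\supseteq A$, chooses a finitely generated abelian subgroup $E\le H$ of finite index, and derives from $r_0(A\cap E)=r_0(E)$ that $A\cap E$ has finite index in $E$ and hence in $H$; your passage through $K\cong\IZ^r\times T$ and the lattice $M\le\IZ^r$ is the same argument with the torsion bookkeeping made explicit. Where you and the paper part ways is the non-finitely-generated case. The paper dispatches it in one line --- ``we lose no generality assuming that the subgroup $A$ is free abelian'' --- which, unpacked, is precisely the claim you could not prove: that local finite index of a full-rank copy of $\IZ^r$ inside $A$ (a finitely generated subgroup, to which the lemma does apply) passes up to $A$ itself, i.e., that your monotone indices $[B_\alpha:A\cap B_\alpha]$ admit a uniform bound.

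Your instinct that this is the crux is correct, and in fact the gap cannot be closed: the proposition as stated is \emph{false} for non-finitely-generated $A$. Let $G$ be the group of finitary permutations of $\IN$ (permutations with finite support). Then $G$ is locally finite, hence locally abelian-by-finite (every finite group is abelian-by-finite), and $r_0(G)=0$. Put $A=\{g\in G:g(1)=1\}$ and $F=\{(1\,2)\}$; then $r_0(A)=0=r_0(G)$, yet $\langle A\cup F\rangle=G$ and the index $[G:A]$ is infinite, since the cosets of $A$ correspond to the possible images of the point $1$. In your notation, taking $A_n$ to be the symmetric group on $\{2,\dots,n\}$ gives $B_n=\langle A_n\cup F\rangle=S_{\{1,\dots,n\}}$ and $[B_n:A\cap B_n]=n$, so the supremum you worried about is genuinely unbounded. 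Thus your write-up is correct exactly as far as it goes, and the step you flagged as the main obstacle is not a missing idea on your part but an error hidden in the paper's ``WLOG''. The statement, with your proof and the paper's, is valid under the additional hypothesis that $A$ is finitely generated, and that is the only case the paper actually uses downstream: in Proposition~\ref{p9.3} one needs only that a free abelian subgroup of rank $r_0(G)$ has locally finite index.
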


\begin{proof} Let $A\subset G$ be a subgroup of free rank $r_0(A)=r_0(G)<\infty$. Then $A$ contains a free abelian subgroup of rank $r_0(A)$ and we lose no generality assuming that the subgroup $A$ is free abelian. We need to show that $A$ has finite index in each finitely generated subgroup $H$ that contains $A$. Since $G$ is locally abelian-by-finite, the finitely generated subgroup $H$ is abelian-by-finite and hence contains a finitely generated abelian subgroup $E$ of finite index in $H$. Then the subgroup $A\cap E$ has finite index in $A$ and thus $r_0(A)=r_0(A\cap E)\le r_0(E)\le r_0(G)=r_0(A)$. It follows from $r_0(A\cap E)=r_0(E)$ that the subgroup $A\cap E$ has finite index in $E$ and hence has finite index in $H$.
\end{proof}

The preceding proposition will be used in the proof of the following characterization of the free rank.

\begin{proposition}\label{p9.3} The free rank $r_0(G)$ of a countable locally finite-by-abelian group $G$ coincides with the smallest cardinality of a subset $S\subset G$ generating a subgroup $\langle S\rangle$ of locally finite index in $G$.
\end{proposition}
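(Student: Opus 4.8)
The plan is to prove the two inequalities separately. Write $m(G)$ for the smallest cardinality of a subset $S\subseteq G$ such that $\langle S\rangle$ has locally finite index in $G$; the goal is $m(G)=r_0(G)$. Two elementary observations carry most of the weight. First, every finitely generated subgroup $H=\langle S\rangle$ of $G$ is finite-by-abelian by the very definition of a locally finite-by-abelian group; choosing a finite normal subgroup $N\subset H$ with $H/N$ abelian, any copy of $\IZ^n$ inside $H$ is torsion-free, hence meets $N$ trivially and embeds into the abelian quotient $H/N$, which is generated by the image of $S$. Consequently $r_0(H)\le r_0(H/N)\le|S|$. Second, if $H$ has locally finite index in $G$, then $r_0(H)=r_0(G)$: the inequality $r_0(H)\le r_0(G)$ is trivial, while for each $n\le r_0(G)$ I would fix a copy $A\cong\IZ^n$ in $G$, take a finite generating set $F$ of $A$ so that $\langle H\cup A\rangle=\langle H\cup F\rangle$ has $[\langle H\cup A\rangle:H]<\infty$ by local finiteness of the index, and then use $[A:A\cap H]\le[\langle H\cup A\rangle:H]<\infty$ to see that $A\cap H$ is a finite-index (hence rank-$n$) subgroup of $A$ lying in $H$; thus $r_0(H)\ge n$.

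For the upper bound $m(G)\le r_0(G)$ when $r_0(G)<\infty$, I would invoke Proposition~\ref{p9.2}. Since $r_0(G)$ is a supremum of natural numbers and is finite, it is attained, so $G$ contains a free abelian subgroup $A\cong\IZ^{r_0(G)}$. Then $r_0(A)=r_0(G)$, and as $G$ is locally finite-by-abelian and hence locally abelian-by-finite, Proposition~\ref{p9.2} guarantees that $A$ has locally finite index in $G$. Because $A$ is generated by $r_0(G)$ elements, this yields $m(G)\le r_0(G)$.

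For the lower bound, let $S$ generate a subgroup $H$ of locally finite index. By the second observation $r_0(H)=r_0(G)$. If $r_0(G)<\infty$, then the upper bound already gives $m(G)\le r_0(G)<\infty$, so a minimizing $S$ is finite, and the first observation yields $|S|\ge r_0(H)=r_0(G)$; combined with the upper bound this gives $m(G)=r_0(G)$. If $r_0(G)=\infty$, the two observations together show that no finite set can generate a subgroup of locally finite index: a finite $S$ would force $r_0(\langle S\rangle)\le|S|<\infty$, contradicting $r_0(\langle S\rangle)=r_0(G)=\infty$. Since $G$ generates the index-one (hence locally finite index) subgroup $G$ itself, the minimum is realized by a countably infinite set, so $m(G)=\infty=r_0(G)$.

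The only mildly delicate ingredients are the two classical facts used above, namely that a finite-index subgroup of $\IZ^n$ again has free rank $n$, and that $[A:A\cap H]\le[\langle H\cup A\rangle:H]$ for subgroups of a common overgroup. The real content, and the step I expect to be the main obstacle, is the second observation: local finiteness of the index is precisely what keeps $\langle H\cup A\rangle$ under control for a finitely generated $A$, so that free abelian subgroups of $G$ survive, up to finite index, inside $H$, forcing $r_0(H)=r_0(G)$.
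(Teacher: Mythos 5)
Your proposal is correct, and your upper bound coincides with the paper's: both realize the (attained, when finite) supremum by a free abelian subgroup $A\cong\IZ^{r_0(G)}$ and invoke Proposition~\ref{p9.2} to conclude that $A$ has locally finite index, giving $m(G)\le r_0(G)$ in your notation. Where you genuinely diverge is the lower bound. The paper runs a single contradiction: assuming $r_0(G)>|S|$, it embeds $H\cong\IZ^{|S|+1}$ into $G$, forms the finitely generated join $F=\langle H\cup S\rangle$, enlarges a finite normal subgroup $N\subset F$ so that $F/N$ is free abelian, and compares the ranks of the images of $\langle S\rangle$ and $H$ in $F/N$ to force $\langle S\rangle$ to have infinite index in $F$, contradicting locally finite index. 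You instead factor this into two independent lemmas: the invariance $r_0(H)=r_0(G)$ for any subgroup $H$ of locally finite index, proved via the coset inequality $[A:A\cap H]\le[\langle H\cup A\rangle:H]$ (a purely group-theoretic step needing no finite-by-abelian hypothesis at all), and the bound $r_0(\langle S\rangle)\le|S|$ for finite $S$, obtained by abelianizing $\langle S\rangle$ itself modulo a finite normal subgroup rather than abelianizing the join $F$. Both routes apply the locally-finite-index hypothesis to a join of $\langle S\rangle$ with a finite set and both pass through a finite-by-abelian quotient, but your decomposition localizes the algebraic input to $\langle S\rangle$, handles the join by bare coset counting, and deals with the case $r_0(G)=\infty$ explicitly (the paper's choice of $\IZ^{|S|+1}$ tacitly presumes $|S|$ finite, leaving the infinite case to the $\infty=\aleph_0$ convention). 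An added benefit: your second observation states explicitly the fact $r_0(H)=r_0(G)$ for the minimizing subgroup, which the paper only extracts implicitly and later cites in Section~\ref{s10} as ``(the proof of) Proposition~\ref{p9.3}''; your version would let that citation point to an actual statement.
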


\begin{proof} Let $S\subset G$ be a subset of smallest possible cardinality $|S|$ generating a subgroup $\langle S\rangle$ of locally finite index in $G$. Proposition~\ref{p9.2} implies that  $|S|\le r_0(G)$.

It remains to prove that $r_0(G)\le|S|$. Assume conversely that $r_0(G)>|S|$ and find a subgroup $H\subset G$ isomorphic to $\IZ^{|S|+1}$. Let $F$ be the subgroup generated by $H\cup S$. Since $G$ is locally finite-by-abelian, the finitely generated subgroup $F$ of $G$ is finite-by-abelian. Consequently, it contains a finite normal subgroup $N\subset F$ with abelian quotient $F/N$. Replacing $N$ by a larger finite normal subgroup, we can assume the quotient $F/N$ is free abelian. Let $q:F\to F/N$ be the quotient homomorphism. It follows that $q(\langle S\rangle)$ is a free abelian subgroup of free rank $r_0(q(\langle S\rangle))\le |S|<r_0(H)=r_0(q(H))$. This implies that $q(\langle S\rangle)$ has infinite index in $F/N$ and hence $\langle S\rangle$ has infinite index in $F$, which contradicts the choice of $S$. This contradiction shows that $r_0(G)\le |S|$.
\end{proof}

In fact, the free rank $r_0(G)$ of a locally abelian-by-finite group $G$ coincides with the asymptotic dimension $\asdim(G)$ of $G$. Let us recall that a metric space $X$ has {\em asymptotic dimension} $\asdim(X)\le n$ for some $n\in\w\cup\{\infty\}$ if for each $\e<\infty$ there is a cover $\U$ of $X$ such that $\sup_{U\in\U}\diam(U)<\infty$ and each $\e$-ball $O_\e(x)$, $x\in X$, meets at most $n+1$ elements of the cover $\U$. The {\em asymptotic dimension} of a metric space $X$ is equal to the smallest number $n\in\w\cup\{\infty\}$ such that $\asdim(X)\le n$. It is known \cite[p.129]{Roe} that coarsely equivalent metric spaces have the same asymptotic dimension. As expected, the free Abelian group $\IZ^n$ has asymptotic dimension $\asdim(\IZ^n)=n$. This fact implies:

\begin{proposition}\label{p9.4} The asymptotic dimension $\asdim(G)$ of a countable locally abelian-by-finite group $G$ coincides with the free rank $r_0(G)$ of $G$.
\end{proposition}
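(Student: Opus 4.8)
The plan is to prove the two inequalities $r_0(G)\le\asdim(G)$ and $\asdim(G)\le r_0(G)$ separately, using the stated fact $\asdim(\IZ^n)=n$, the coarse invariance of asymptotic dimension, and the standard property that asymptotic dimension does not increase when passing to a subspace.

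For the lower bound, note that for every $n\le r_0(G)$ there is a subgroup $H\le G$ isomorphic to $\IZ^n$. Restricting the proper left-invariant metric of $G$ to $H$ yields a proper left-invariant metric on $H$, and by subspace monotonicity $n=\asdim(\IZ^n)=\asdim(H)\le\asdim(G)$. Taking the supremum over $n\le r_0(G)$ gives $r_0(G)\le\asdim(G)$; in particular, if $r_0(G)=\infty$ then $\asdim(G)=\infty$ and there is nothing more to prove.

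For the upper bound I would assume $r_0(G)=n<\infty$ and argue scale by scale. Fix a finite $R$. The $R$-connected component of $1_G$ is the finitely generated subgroup $G_R=\langle O_R(1_G)\rangle$, and $G$ decomposes into the disjoint cosets $gG_R$, which are exactly its $R$-connected components. By left-invariance any two points in distinct cosets are at distance $>R$, so each ball $O_R(x)$ is contained in a single coset. Now $G_R$ is a finitely generated subgroup of the locally abelian-by-finite group $G$, hence abelian-by-finite; it therefore contains a finite-index free abelian subgroup of rank $r_0(G_R)\le r_0(G)=n$, so $G_R$ is coarsely equivalent to $\IZ^{r_0(G_R)}$ and $\asdim(G_R)=r_0(G_R)\le n$. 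Consequently there is a cover of $G_R$ of uniformly bounded diameter in which every $R$-ball meets at most $n+1$ members. Transporting this cover by the left translations (which are isometries carrying $G_R$ onto each coset $gG_R$) and taking the union $\U$ over all cosets produces a cover of $G$: since distinct cosets are $R$-separated, any $R$-ball lies in a single coset and thus meets at most $n+1$ members of $\U$, while the diameters of members of $\U$ are bounded by the coset-independent bound for the cover of $G_R$. As $R$ was arbitrary, this witnesses $\asdim(G)\le n=r_0(G)$.

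The main obstacle is the upper bound, and specifically the observation that $\asdim\le n$ need only be checked scale by scale: for each fixed $R$ the $R$-balls cannot cross the cosets of $G_R$, so the possibly non-finitely-generated group $G$ is controlled at scale $R$ by the single finitely generated abelian-by-finite piece $G_R$, whose asymptotic dimension is read off from its free rank. Once this reduction is in place, the remaining ingredients (finite-index abelian subgroups in finitely generated abelian-by-finite groups, coarse equivalence to $\IZ^{r_0}$, and the assembly over cosets via homogeneity) are routine.
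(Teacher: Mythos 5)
Your proof is correct, but your upper bound takes a genuinely different route from the paper's. The lower bound $r_0(G)\le\asdim(G)$ is the same in both (an $\IZ^n$ subgroup plus subspace monotonicity and $\asdim(\IZ^n)=n$). For the upper bound, the paper argues by contradiction and cites Theorem 2.1 of Dranishnikov--Smith [DS] — that $\asdim(G)$ equals the supremum of the asymptotic dimensions of the finitely generated subgroups of $G$ — and then applies to a hypothetical finitely generated $H$ with $\asdim(H)>r_0(G)$ the same finite-index free abelian subgroup computation that you apply to $G_R$. Your scale-by-scale argument instead reproves directly the relevant case of that reduction: at scale $R$ the group is the $R$-separated disjoint union of the isometric cosets $gG_R$ of the finitely generated subgroup $G_R=\langle O_R(1_G)\rangle$ (finitely generated because a proper left-invariant metric on a countable group is boundedly-finite), so a uniformly bounded cover of $G_R$ of $R$-multiplicity at most $n+1$ transports by left translations to such a cover of all of $G$. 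What your approach buys is self-containedness — no appeal to [DS], and indeed your transport argument establishes the Dranishnikov--Smith reduction for an arbitrary countable group, since it uses only that the $R$-connected components are isometric cosets — whereas the paper's citation yields a shorter proof. One cosmetic correction: the $R$-separation of distinct cosets follows not from left-invariance as such but from the fact that the cosets $gG_R$ are exactly the $R$-connected components of $G$ (as noted in Section 9), so that $d(x,y)\le R$ forces $x$ and $y$ into the same component; left-invariance is what identifies the components with the cosets in the first place.
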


\begin{proof} It follows from $\asdim(\IZ^n)=n=r_0(\IZ^n)$ that $r_0(G)\le \asdim(G)$. To prove that the reverse inequality, assume that $r_0(G)<\asdim(G)$. By Theorem 2.1 of \cite{DS}, the asymptotic dimension $\asdim(G)$ is equal to the least upper bound of asymptotic dimensions of finitely generated subgroups of $G$. Consequently, we can find a finitely generated subgroup $H\subset G$ such that $\asdim(H)>r_0(G)$. Since $G$ is locally abelian-by-finite, the finitely generated subgroup $H$ is abelian-by-finite and hence it contains a (finitely generated) abelian subgroup $A\subset H$ of finite index in $H$. Replacing $A$ by a suitable subgroup of finite index, we can assume that the group $A$ is free abelian. Since $A$ has finite index in $H$, the identity embedding $A\to H$ is a coarse equivalence and hence $r_0(A)=\asdim(A)=\asdim(H)>r_0(G)\ge r_0(A)$, which is a desired contradiction completing the proof.
\end{proof}

Next, we detect shift-homogeneous spaces among countable groups endowed with a proper left-invariant metric.

\begin{proposition}\label{p9.5} Each countable locally finite-by-abelian group $G$ endowed with a proper left-invariant metric is shift-homogeneous.
\end{proposition}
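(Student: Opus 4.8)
The plan is to exhibit, for any two points $x,y\in G$, an explicit shift-isometry carrying $x$ to $y$, namely the left shift $f\colon G\to G$, $f\colon g\mapsto yx^{-1}g$. Since the metric on $G$ is left-invariant, $f$ is automatically a bijective isometry, and clearly $f(x)=y$. The whole difficulty is therefore concentrated in verifying the defining bounded-displacement property: for every large scale connected subset $C\subset G$ one must show $d(f|C,\id)=\sup_{c\in C}d(f(c),c)<\infty$.

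First I would rewrite the displacement using left-invariance. For $c\in G$ and $g:=yx^{-1}$ we have
$$d(f(c),c)=d(gc,c)=d(c^{-1}gc,1_G),$$
so the quantity to be bounded is $\sup_{c\in C}d(c^{-1}gc,1_G)$. Because $G$ is boundedly-finite (a countable proper homogeneous space), every finite subset is bounded; hence it suffices to prove that the set of conjugates $\{c^{-1}gc:c\in C\}$ is finite. Next I would localize $C$: a large scale connected set is $\e$-connected for some finite $\e$, and since any two of its points are joined by an $\e$-chain, they all lie in a single $\e$-connected component of $G$, which, as recalled at the beginning of this section, is a left coset $c_0G_\e$ of the subgroup $G_\e=\langle O_\e(1_G)\rangle$. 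Since $O_\e(1_G)$ is finite by properness, $G_\e$ is finitely generated and $C\subset c_0G_\e$. Writing $c=c_0h$ with $h\in G_\e$ and putting $g'=c_0^{-1}gc_0$, we obtain $c^{-1}gc=h^{-1}g'h$, so $\{c^{-1}gc:c\in C\}\subset\{h^{-1}g'h:h\in G_\e\}$.

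The heart of the argument, and the step I expect to be the main obstacle, is showing that this last orbit is finite. For this I would pass to the subgroup $K=\langle\{g'\}\cup G_\e\rangle$, which is finitely generated, being generated by one extra element over the finitely generated group $G_\e$. Since $G$ is locally finite-by-abelian, $K$ is finite-by-abelian: it has a finite normal subgroup $N$ with $K/N$ abelian. Then for every $h\in K$ the commutator $h^{-1}g'h(g')^{-1}$ lies in $N$, because its image in the abelian quotient $K/N$ is trivial; hence $h^{-1}g'h\in g'N$, and the conjugacy class of $g'$ in $K$ is contained in the finite coset $g'N$.

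In particular $\{h^{-1}g'h:h\in G_\e\}$ is finite, hence so is $\{c^{-1}gc:c\in C\}$, and the displacement $\sup_{c\in C}d(c^{-1}gc,1_G)$ is finite as required. Running this argument for every large scale connected $C$ shows that $f$ is a shift-isometry, and since $x,y$ were arbitrary, $G$ is shift-homogeneous. I note that the only place where the hypothesis on $G$ is genuinely used is the passage from the finitely generated subgroup $K$ to the finiteness of the conjugacy class of $g'$; everything else is formal manipulation with the left-invariant metric, so the argument in fact works verbatim for any group all of whose finitely generated subgroups are FC-groups.
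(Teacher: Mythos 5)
Your proposal is correct and follows essentially the same route as the paper: the same left shift $f\colon g\mapsto yx^{-1}g$, the same localization of a large scale connected set into a finitely generated subgroup (via the coset $c_0G_\e$ of the finitely generated component subgroup $G_\e$), and the same key step of bounding the displacement by the finite conjugacy class $\{h^{-1}g'h\}\subset g'N$ inside the finite-by-abelian finitely generated subgroup. Your closing observation that only the FC property of finitely generated subgroups is used is accurate and consistent with the paper's citation of Neumann's theorem, but it does not change the argument.
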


\begin{proof} Endow the group $G$ with a proper left-invariant metric $d$. To show that the metric space $(G,d)$ is shift-homogeneous, take any two elements $a,b\in G$ and consider the left shift $f:G\to G$, $f:x\mapsto ba^{-1}x$. It is clear that $f(a)=b$. The left-invariance  of the metric $d$ implies that $f$ is an isometry of $G$. To show that $f$ is a shift-isometry, we should check that $d(f|L,\id)<\infty$ for each large scale connected subset $L\subset G$. It follows that the set $L\cup\{ba^{-1}\}$ is contained in a finitely-generated subgroup $H\subset G$. Since the group $G$ is locally finite-by-abelian, its finitely-generated subgroup $H$ is finite-by-abelian and hence it contains a finite normal subgroup $N\subset H$ with abelian quotient $H/N$. Then the set $F=(ba^{-1})^H=\{hba^{-1}h^{-1}:h\in H\}\subset ba^{-1}N$ is finite. Observe that for each $x\in L\subset H$ we get $f(x)=ba^{-1}x=xx^{-1}ba^{-1}x\in xF$ and hence $d(f(x),x)\le\diam (F\cup\{1_G\})<\infty$. So, the left shift $f$ is a shift-isometry of $G$ and the metric space $(G,d)$ is shift-homogeneous.
\end{proof}

Finally let us prove a metrization lemma.

\begin{lemma}\label{l9.6} For each infinite finitely generated subgroup $H$ of locally finite index in a countable group $G$ there is a proper left-invariant metric $d$ on $G$ such that $H$ coincides with the factorizing component $G_{s(G)}$ of $G$.
\end{lemma}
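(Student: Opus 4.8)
The plan is to build $d$ from a symmetric weight function and then to pin down its small-scale geometry so that the factorizing step equals $1$ and the corresponding component is exactly $H$. Since $G$ is countable, I would fix an enumeration $G=\{g_1,g_2,\dots\}$ and a finite symmetric generating set $S\subseteq H\setminus\{1_G\}$ of the finitely generated group $H$. The locally-finite-index hypothesis will not be used in the construction itself but in the computation of $s(G)$: for each finite $\delta$ the component $G_\delta(1_G)=\langle O_\delta(1_G)\rangle$ is a finitely generated subgroup containing $H$, so $[G_\delta(1_G):H]<\infty$, and this is precisely what yields $s(G)\le 1$.

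Next I would define a symmetric weight $w:G\to[0,\infty)$ by $w(1_G)=0$, $w(s)=1$ for $s\in S$, $w(h)=\ell_S(h)\ge 2$ for $h\in H\setminus(S\cup\{1_G\})$ (the word length with respect to $S$), and on $G\setminus H$ I would assign to each pair $\{g,g^{-1}\}$ a common value $\ge 2$ so that every finite value is attained only finitely often and $w(g)\to\infty$. Let $\ell(g)=\inf\{\sum_{i=1}^k w(g_i):g=g_1\cdots g_k\}$ be the induced length function and put $d(x,y)=\ell(x^{-1}y)$. By construction $\ell$ is symmetric and subadditive, so $d$ is a left-invariant pseudometric; and since every nontrivial factor has weight $\ge 1$, a factorization of total weight $\le r$ uses at most $r$ nontrivial factors, each lying in the finite set $\{h:w(h)\le r\}$ (finite because word balls of $H$ are finite and $w\to\infty$ off $H$). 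Hence $O_r(1_G)$ is finite for every $r$, which gives properness and in particular $\ell(g)>0$ for $g\ne 1_G$, so $d$ is a genuine proper left-invariant metric.

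The decisive small-scale facts follow from $w\ge 1$ on $G\setminus\{1_G\}$: a product of nontrivial factors of total weight $\le 1$ must reduce to a single weight-$1$ factor, so $O_1(1_G)=S\cup\{1_G\}\subseteq H$, while $O_\e(1_G)=\{1_G\}$ for every $\e<1$. Consequently $G_\e(1_G)=\{1_G\}$ for $\e<1$, whereas $G_1(1_G)=\langle O_1(1_G)\rangle=\langle S\rangle=H$; moreover $h\cdot O_1(1_G)\subseteq H$ for each $h\in H$ shows that the $1$-connected component of $1_G$ cannot leave $H$, so $G_1(1_G)=H$ exactly. It then remains to compute $s(G)$. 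For $\delta\ge 1$ the $1$-components are the cosets of $G_1(1_G)=H$, so $\{G_1(y):y\in G_\delta(x)\}$ has cardinality $[G_\delta(1_G):H]<\infty$ by locally finite index; thus $1\in ss(G)$ and $s(G)\le 1$. Conversely, for $\e<1$ the $\e$-components are singletons, so taking $\delta=1$ the set $\{G_\e(y):y\in G_1(1_G)\}$ is infinite because $H$ is infinite, whence $\e\notin ss(G)$ and $s(G)\ge 1$. Therefore $s(G)=1$ and $G_{s(G)}=G_1(1_G)=H$.

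The main obstacle is the verification in the middle paragraph that the induced length function is simultaneously proper and behaves as prescribed at scale $1$ — in particular that no factorization routed through $G\setminus H$ can produce an element of length $\le 1$ outside $S$; this is exactly what forces $O_1(1_G)\subseteq H$ and hence $G_{s(G)}=H$. The bound $s(G)\le 1$ is supplied by the finiteness of $[G_\delta(1_G):H]$, i.e.\ the locally-finite-index hypothesis (consistent with Proposition~\ref{p9.1}), while the infinitude of $H$ is precisely what prevents $s(G)$ from dropping below $1$.
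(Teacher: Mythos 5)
Your proof is correct and takes essentially the same approach as the paper: both construct a proper left-invariant metric on $G$ whose closed unit ball at $1_G$ is a finite symmetric generating set of $H$ (so that $G_1(1_G)=\langle O_1(1_G)\rangle=H$) while all balls of radius $<1$ are singletons, and then deduce $s(G)=1$ from the locally-finite-index hypothesis together with the infinitude of $H$. The only difference is cosmetic: the paper realizes such a metric via a nested exhaustion of finite symmetric sets with $F_{n-1}\cdot F_{n-1}\subset F_n$ and $d(x,y)=\min\{n\in\w:y^{-1}x\in F_n\}$ rather than your weighted word-length construction, and it leaves implicit the verification of $s(G)=1$ and $G_{s(G)}=H$ that you carry out in detail.
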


\begin{proof} Let $F_0=\{1_G\}$ and $F_1=F_1^{-1}$ be a finite subset generating the subgroup $H$. Choose a sequence $(F_n)_{n=2}^\infty$ of finite sets such that $G=\bigcup_{n=2}^\infty F_n$ and $F_{n-1}\cdot F_{n-1}\subset F_n=F_n^{-1}$ for all $n\ge 2$. Then the proper left-invariant metric $$d(x,y)=\min\{n\in\w:y^{-1}x\in F_n\}$$on $G$ has the required property: $s(G)=1$ and the factorizing component $G_{s(G)}=G_{s(G)}(1_G)$ of $(G,d)$ coincides with the subgroup $H$.
\end{proof}

\section{Proof of Theorem~\ref{t1.2}}\label{s10}

Let $G_1,G_2$ be two countable locally finite-by-abelian groups. In each group $G_i$ fix a subset $S_i\subset G_i$ of the smallest possible cardinality generating a subgroup $H_i=\langle S_i\rangle$ of locally finite index in $G_i$. The minimality of $S_i$ implies that the subgroup $H_i$ either is infinite or trivial. We should prove that the groups $G_1$ and $G_2$ are coarsely isomorphic if and only if one of the following conditions holds:
\begin{enumerate}
\item $r_0(G_1)=r_0(G_2)=\infty$;
\item $r_0(G_1)=r_0(G_2)=0$ and $\fact_{G_1{/}H_1}=\fact_{G_2{/}H_2}$;
\item $0<r_0(G_1)=r_0(G_2)<\infty$ and $\fact_{G_1{/}H_1}=^*\fact_{G_2{/}H_2}$.
\end{enumerate}

To prove the ``only if'' part, assume that the groups $G_1,G_2$ are coarsely isomorphic. Then $r_0(G_1)=\asdim(G_1)=\asdim(G_2)=r_0(G_2)$ by Proposition~\ref{p9.4} and the invariance of the asymptotic dimension under coarse equivalences.

If $r_0(G_1)=r_0(G_2)=\infty$, then the condition (1) holds and we are done.

If $r_0(G_1)=r_0(G_2)=0$, then the groups $G_1$, $G_2$ are locally finite, the subgroups $H_1$ and $H_2$ are trivial, and
the equality $\fact_{G_1/H_1}=\fact_{G_1}=\fact_{G_2}=\fact_{G_2/H_2}$ follows from Theorem~\ref{t2.1} and Proposition~\ref{p2.3}.

It remains to consider the case $0<r_0(G_1)=r_0(G_2)<\infty$. In this case the groups $G_1,G_2$ are not locally finite and the subgroups $H_1$ and $H_2$ are infinite. Using Lemma~\ref{l9.6}, endow each group $G_i$ with a proper left-invariant metric $d_{G_i}$ such that the subgroup $H_i$ coincides with the factorizing component $(G_i)_{s(G_i)}$. In this case $\fact_{G_i/H_i}=\fact_{G_i}$.
It is well-known that finite-by-abelian groups are amenable, which implies that the groups $H_1,H_2$ (seen as homogeneous metric spaces) are amenable. By Proposition~\ref{p9.5} the groups $G_1,G_2$ are shift-homogeneous. So, it is legal to apply
 Theorem~\ref{t1.5} and conclude that
$\fact_{G_1/H_1}=\fact_{G_1}=^* \fact_{G_2}=\fact_{G_2/H_2}$.
\smallskip

To prove the ``if'' part of Theorem~\ref{t1.2}, assume that one of the conditions (1)--(3) holds.

If $r_0(G_1)=r_0(G_2)=\infty$, then the groups $G_1,G_2$ have infinite asymptotic dimension according to Proposition~\ref{p9.4}. By \cite[6.3]{BHZ},  each countable locally finite-by-abelian group is coarsely isomorphic to an abelian group and by \cite[6.4]{BHZ}, any two countable abelian groups of infinite asymptotic dimension are coarsely isomorphic, which implies that the groups $G_1,G_2$ are coarsely isomorphic.

If $r_0(G_1)=r_0(G_2)=0$ and $\fact_{G_1/H_1}=\fact_{G_2/H_2}$, then the groups $G_1,G_2$ are locally finite and the subgroups $H_1,H_2$ are trivial. Then the equality $\fact_{G_1}= \fact_{G_1/H_2}=\fact_{G_2/H_2}=\fact_{G_2}$ implies that $G_1$ and $G_2$ are coarsely isomorphic, see Theorem~\ref{t2.1} and Proposition~\ref{p2.3}.

Finally assume that $0<r_0(G_1)=r_0(G_2)<\infty$ and $\fact_{G_1/H_1}=^* \fact_{G_2/H_2}$. Using Lemma~\ref{l9.6}, endow each group $G_i$ with a proper left-invariant metric $d_i$ such that the subgroup $H_i$ coincides with the factorizing component $(G_i)_{s(G_i)}$. In this case the space $G_i/H_i=\{xH_i:x\in G_i\}$ can be identified with the ultrametric space $G_i/s(G_i)$. By Theorem~\ref{t4.1}, the group $G_i$ is coarsely isomorphic to $H_i\times (G_i/H_i)$ and by Corollary~\ref{c2.2} the ultrametric space $G_i/H_i$ is coarsely isomorphic to $\IZ_{\fact_{G_i/H_i}}$.
By (the proof of) Proposition~\ref{p9.3}, the subgroups $H_1,H_2$ have free rank $r_0(H_1)=r_0(G_1)=r_0(G_2)=r_0(H_2)$.
Being finitely generated, these subgroups are finite-by-abelian and hence abelian-by-finite. Consequently, each group $H_i$ contains a normal abelian subgroup $A_i$ with finite index. Replacing $A_i$ by a smaller subgroup of finite index, we can assume that $A_i$ is free abelian. By \cite[5.1]{BHZ}, the group $H_i$ is coarsely isomorphic to $A_i\times (H_i/A_i)$ and hence to $\IZ^{r_0(G_i)}\times H_i/A_i$.
By \cite[5.5]{BHZ}, the group $\IZ$ is coarsely isomorphic to the product $\IZ\times\IZ_k$ for any $k\in\IN$, which implies that the product $\IZ^{r_0(G_i)}\times H_i/A_i$ is coarsely isomorphic to $\IZ^{r_0(G_i)}$. Then the group $G_i$ is coarsely isomorphic to $\IZ^{r_0(G_i)}\times\IZ_{\fact_{G_i/H_i}}$.

Since $\fact_{G_1/H_1}=^* \fact_{G_2/H_2}$, there are natural numbers $n_1,n_2\in\IN$ such that $\fact_{n_1}+\fact_{G_1/H_1}=\fact_{n_2}+\fact_{G_2/H_2}$. Theorem~\ref{t2.1} implies that the groups $\IZ_{n_1}\times \IZ_{\fact_{G_1/H_2}}$ and $\IZ_{n_2}\times \IZ_{\fact_{G_2/H_2}}$ are coarsely isomorphic.  Since $r_0(G_i)>0$, the group $\IZ^{r_0(G_i)}$  is coarsely isomorphic to $\IZ_{n_i}\times\IZ^{r_0(G_i)}$.
So, we obtain the following chain of coarse isomorpisms:
$$
\begin{aligned}
G_1&\cong H_1\times(G_1/H_1)\cong H_1\times \IZ_{\fact_{G_1/H_1}}\cong \IZ^{r_0(G_1)}\times \IZ_{\fact_{G_1/H_1}}\cong\IZ^{r_0(G_1)}\times\IZ_{n_1}\times \IZ_{\fact_{G_1/H_1}}\cong\\
 &\cong\IZ^{r_0(G_2)}\times \IZ_{n_2}\times\IZ_{\fact_{G_2/H_2}}\cong \IZ^{r_0(G_2)}\times \IZ_{\fact_{G_2/H_2}}\cong H_2\times(G_2/H_2)\cong G_2.
\end{aligned}
$$

\section{Acknowledgements}

The authors would like to thank Jose Manuel Higes L\'opez for valuable discussions during the initial stages of work on this paper.

\end{document}